\def\bftau{{\boldsymbol \tau}}
\newtheorem{theorem}{Theorem}
\newtheorem{lemma}{Lemma}
\newenvironment{proof}{\begin{trivlist}\item[]{\emph{Proof.}}}
               {\hfill$\Box$\end{trivlist}}
\begin{document}

\title{Optimal spline spaces for $L^2$ $n$-width problems with boundary conditions}
\author{Michael S. Floater\footnote{
Department of Mathematics,
University of Oslo, Moltke Moes vei 35, 0851 Oslo, Norway,
{\it email: michaelf@math.uio.no}}
\and
Espen Sande\footnote{
Same address,
{\it email: espsand@math.uio.no}.
}
}
\maketitle

\abstract{In this paper we show that, with respect to the $L^2$ norm, three classes of functions in $H^r(0,1)$, defined by certain boundary conditions, admit optimal spline spaces of all degrees $\geq r-1$, and all these spline spaces have uniform knots.}

\smallskip

\noindent {\em Math Subject Classification: }
Primary: 41A15, 47G10, 
Secondary: 41A44  

\smallskip

\noindent {\em Keywords: } $n$-Widths, Splines, Isogeometric analysis, Green's functions.

\section{Introduction}

Recently there has been renewed interest in using splines
of maximal smoothness, i.e. of smoothness $C^{d-1}$ for splines of degree $d$, as finite elements for solving PDEs.
This is one of the main ideas behind isogeometric analysis \cite{Buffa:14,Hughes:05,Evans:2009,Takacs:2016}.
This raises the issue of how good these splines
are at approximating functions of a certain smoothness class, especially with respect to approximation in the $L^2$ norm.
This was answered to some extent by Melkman and Micchelli \cite{Melkman:78} who studied the $L^2$ approximation of functions $u$ in the Sobolev space 
$$H^r= H^r(0,1)=\{u\in L^2(0,1) : u^{(\alpha)}\in L^2(0,1), \quad \alpha=1,2,\ldots,r \},$$
and measured the error relative to the $L^2$ norm of $u^{(r)}$. They showed that from this point of view there are two spaces of splines that are \emph{optimal},
one of degree $r-1$, the other of degree $2r-1$.
Later it was shown in \cite{Floater:17} that these two
spaces are just the first two of a whole sequence
of optimal spline spaces of degrees $lr-1$, $l=1,2,3,\ldots$. In the case $r=1$ there is therefore an optimal spline space of every degree, but whether this is true for $r\geq 2$ is an open question.

In this paper we study the related problem of approximating functions in $H^r$ subject to certain boundary conditions. Specifically, we look at
\begin{equation*}
 \begin{aligned}
  H^r_0&=\{u\in H^r:\quad u^{(k)}(0)=u^{(k)}(1)=0,\quad 0\leq k<r,\quad k\text{ even}\},\\
  H^r_1&=\{u\in H^r:\quad u^{(k)}(0)=u^{(k)}(1)=0,\quad 0\leq k<r,\quad k\text{ odd} \},\\
  H^r_2&=\{u\in H^r:\quad u^{(k)}(0)=u^{(l)}(1)=0,\quad 0\leq k, l<r,\quad k\text{ even},\quad l\text{ odd} \}.
 \end{aligned}
\end{equation*}
Our main result is to show that for all $r\geq 1$, the spaces $H^r_i$, $i=0,1,2$, admit optimal spline spaces of \emph{all} degrees $\geq r-1$. This is very similar to the numerical results reported by Evans et al. \cite{Evans:2009} regarding the degrees of the spline spaces, however their paper considered other boundary conditions (periodic conditions or no conditions). 

The derivations in \cite{Melkman:78} and \cite{Floater:17} were based on
the use of an integral operator $K$ that represents integration
$r$ times.
Roughly speaking, 
and ignoring what happens at the boundary of the interval, 
if $X_n$ is an optimal space of splines of some degree
$d$, then the space $K(X_n)$, i.e., the space generated
by integrating the splines in $X_n$, $r$ times,
is also an optimal space, consisting of splines of
degree $d+r$.

In contrast, in this paper we work only with an integral operator $K$
that represents a \emph{single} integration.
We generate optimal spline spaces for $H_i^r$, $i=0,1,2$,
by applying $K$, i.e., \emph{one} integration,
both to the initial Sobolev space $H_i^1$ and its optimal spline space, $X_n$, of degree $0$. 
This approach works for $H_i^r$, $i=0,1,2$, because, unlike $H^r$ itself,
when we apply (the right) $K$ to the functions in $H_i^r$
we get back a similar space, with $r$ increased by one.

The optimal spline spaces we obtain have the same type of boundary
conditions (odd or even derivatives are zero at the
ends of the interval) as the spaces $H^r_i$ themselves.
The splines also have uniform knots,
thus making them convenient to use in practice. In particular, some of the spline spaces corresponding to $H^r_1$ are precisely the `reduced spline spaces'
studied recently by Takacs and Takacs \cite[Section 5]{Takacs:2016} (see also the end of Section 3 in this paper).
They proved approximation estimates and inverse inequalities
for these spaces, with a view to constructing fast iterative methods for solving PDEs
in the framework of isogeometric analysis.

\section{Kolmogorov $n$-widths}
We start by formulating the concept of optimality in terms of Kolmogorov $n$-widths~\cite{Pinkus:85}.
Denote the norm and inner product on $L^2=L^2(0,1)$ by
$$ \| f\|^2 = (f,f), \qquad (f,g) = \int_0^1 f(t) g(t) \, dt, $$
for real-valued functions $f$ and $g$. 
For a subset $A$ of $L^2$, and an $n$-dimensional subspace $X_n$ of $L^2$, let
$$ E(A, X_n) = \sup_{u \in A} \inf_{v \in X_n} \|u-v\| $$
be the distance to $A$ from $X_n$ relative to the $L^2$ norm.
Then the Kolmogorov $L^2$ $n$-width
of $A$ is defined by
$$ d_n(A) = \inf_{X_n} E(A, X_n). $$
A subspace $X_n$ is called an optimal space for $A$
provided that
\begin{equation*}
d_n(A) = E(A, X_n).
\end{equation*}
Now, consider the function classes
\begin{equation}\label{eq:allA}
A^r_i=\{u\in H^r_i : \|u^{(r)}\|\leq 1\}, \quad i=0,1,2.
\end{equation}
By looking at $u/\|u^{(r)}\|$, for functions $u\in H^r_i$, we have for any $n$-dimensional subspace $X_n$ of $L^2$,
$$\|u-P_nu\|\leq E(A^r_i, X_n)\|u^{(r)}\|,$$
where $P_n$ denotes the $L^2$ projection onto $X_n$. Moreover, if $X_n$ is an optimal subspace for $A^r_i$, then
\begin{equation*}
\|u-P_nu\|\leq d_n(A^r_i)\|u^{(r)}\|,
\end{equation*}
and $d_n(A^r_i)$ is the least possible constant over all $n$-dimensional subspaces $X_n$.

\section{Main results}
We first describe the $n$-widths for $A^r_i$ in \eqref{eq:allA} and the optimal subspaces based on eigenfunctions.
We will show
\begin{theorem}\label{thm:eig}
For any integer $r\geq 1$, the $n$-widths of $A^r_i$, $i=0,1,2,$ are
\begin{equation}\label{eq:dnAi}
 d_n(A^r_0) = \frac{1}{(n+1)^r\pi^r}, \qquad d_n(A^r_1) = \frac{1}{(n\pi)^r}, \qquad  d_n(A^r_{2}) = \frac{1}{(n+\frac{1}{2})^r\pi^r}.
\end{equation}
Furthermore, the spaces
\begin{align}
 &[\sin \pi x, \sin 2\pi x, \ldots, \sin n \pi x],\label{eigspace0}\\
 &[1,\cos \pi x, \cos 2\pi x, \ldots, \cos (n-1) \pi x],\label{eigspace1}\\
 &[\sin (1/2) \pi x, \sin (3/2) \pi x, \ldots, \sin (n-1/2) \pi x]\label{eigspace2}
\end{align}
are optimal $n$-dimensional spaces for, respectively, $A^r_0$, $A^r_1$ and $A^r_2$.
\end{theorem}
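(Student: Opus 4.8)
The plan is to compute the $n$-widths and verify optimality of the given spaces by invoking the classical theory of $n$-widths for sets defined by compact operators, as developed in Pinkus's book. The key observation is that each class $A^r_i$ can be written as the image under a compact integral operator of the unit ball in $L^2$. Specifically, I would write $u = K_i g$ where $g = u^{(r)}$ and $K_i$ is the operator that integrates $r$ times subject to the boundary conditions defining $H^r_i$; then $A^r_i = \{K_i g : \|g\| \le 1\}$ is precisely the image of the unit ball of $L^2$. For such sets, a fundamental theorem (see Pinkus) states that the $n$-width $d_n(A^r_i)$ equals the $(n+1)$-st singular value $s_{n+1}$ of $K_i$, and that an optimal subspace is spanned by the first $n$ singular functions, i.e. the eigenfunctions of the self-adjoint operator $K_i^* K_i$ corresponding to its $n$ largest eigenvalues.

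First I would identify the operator $K_i^* K_i$ explicitly. Since $K_i$ is essentially $r$-fold integration, the composition $K_i^* K_i$ is a Green's-function operator whose inverse is a differential operator of order $2r$, namely $(-1)^r \frac{d^{2r}}{dx^{2r}}$, subject to a set of $2r$ homogeneous boundary conditions determined by $H^r_i$. Rather than working with the integral operator directly, I would instead solve the eigenvalue problem for this differential operator: find all $\phi$ and $\lambda$ with $(-1)^r \phi^{(2r)} = \lambda^{-1}\phi$ under the relevant boundary conditions, since the eigenfunctions of $K_i^* K_i$ are exactly these $\phi$ with eigenvalue $\lambda$. The candidate eigenfunctions are the trigonometric functions listed in \eqref{eigspace0}--\eqref{eigspace2}: for $A^r_0$ one checks that $\sin k\pi x$ satisfies $u^{(k)}(0)=u^{(k)}(1)=0$ for even $k<r$ and is an eigenfunction of $d^{2r}/dx^{2r}$ with eigenvalue $(k\pi)^{2r}$; the analogous verifications for $\cos k\pi x$ and $\sin(k-\tfrac12)\pi x$ handle $A^r_1$ and $A^r_2$.

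The main computational step is then to order these eigenvalues and extract the singular values. For each class the eigenvalues of $K_i^* K_i$ are $(\mu_k)^{-2r}$ where $\mu_k$ runs through $\{k\pi\}$, $\{k\pi\}$ (starting from $k=0$), and $\{(k-\tfrac12)\pi\}$ respectively; taking square roots gives the singular values $s_k = \mu_k^{-r}$, which are already in decreasing order since $\mu_k$ is increasing. The $(n+1)$-st singular value therefore yields exactly the three widths in \eqref{eq:dnAi}: for $A^r_0$ the frequencies are $\pi,2\pi,\dots$ so $s_{n+1} = ((n+1)\pi)^{-r}$; for $A^r_1$ they are $0,\pi,2\pi,\dots$ so the $(n+1)$-st nonzero-indexed value is $(n\pi)^{-r}$; for $A^r_2$ they are $\tfrac12\pi,\tfrac32\pi,\dots$ giving $((n+\tfrac12)\pi)^{-r}$.

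The step I expect to require the most care is establishing rigorously that $A^r_i$ really is the image of the $L^2$ unit ball under a compact operator whose singular functions are the claimed trigonometric systems, and in particular that these boundary-condition eigenvalue problems produce a \emph{complete orthonormal} system with the eigenvalues correctly accounted for including multiplicities and the constant mode for $A^r_1$. The subtlety is that the boundary conditions defining $H^r_i$ involve only derivatives up to order $r-1$ of specified parity, whereas a clean self-adjoint realization of $(-1)^r d^{2r}/dx^{2r}$ needs $2r$ boundary conditions; I would need to verify that the natural (variational) boundary conditions arising from the adjoint $K_i^*$ supply exactly the complementary conditions so that the trigonometric functions satisfy the full set, and that no eigenfunctions are missed. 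Once this completeness and the singular-value/$n$-width correspondence are in place, optimality of the listed spaces follows immediately from the general theorem identifying the span of the top singular functions as an optimal subspace.
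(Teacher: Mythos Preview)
Your overall strategy—representing $A^r_i$ as the image of the $L^2$ unit ball under an integral operator and invoking the singular-value characterization of $n$-widths—is the same as the paper's. However, there is a genuine gap in the case $i=1$: the set $A^r_1$ is \emph{not} the image of the unit ball under any bounded operator, because it contains the entire line $\Pi_0$ of constant functions (every constant $u$ lies in $H^r_1$ and has $\|u^{(r)}\|=0$), and is therefore unbounded in $L^2$. Your bookkeeping of the frequency $\mu_0=0$ as an extra singular mode is the symptom of this: a zero frequency would correspond to an infinite eigenvalue of $K_1^*K_1$, impossible for a compact operator. The paper resolves this by writing $A^r_1=\Pi_0\oplus K_1(B)$ as an orthogonal sum, applying the singular-value theory only to the $K_1(B)$ part, and then observing that the optimal space $[\cos\pi x,\ldots,\cos n\pi x]$ for $K_1(B)$ is orthogonal to $\Pi_0$, so adjoining the constant gives an $(n{+}1)$-dimensional optimal space for $A^r_1$ with the required index shift. (A milder version of the same issue lurks in $i=0$: not every $g$ with $\|g\|\le 1$ arises as $u^{(r)}$ for some $u\in H^r_0$, so your $K_0$ cannot literally be ``integrate $r$ times''; the paper's operator is $K_1^*=K^*(I-Q)$, which projects out the constant before integrating.)

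There is also a difference in execution worth noting. Rather than analyzing the $2r$-th order operator $K_i^*K_i$ directly, the paper works with a \emph{single} integration operator $K$ (or $K_1=(I-Q)K$), shows that $A^r_i$ is obtained by applying $K$ and $K^*$ alternately $r$ times to $B$, and then uses the general relation $d_n(A^r)=d_n(A)^r$ from \eqref{eq:dnAr} together with the case $r=1$. This reduces the eigenvalue computation to the second-order Sturm--Liouville problems $-u''=\lambda^{-1}u$ with the appropriate boundary conditions, for which completeness and self-adjointness are standard, thereby sidestepping precisely the difficulty you flagged about supplying the natural boundary conditions for the $2r$-th order problem.
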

Here, $[\cdots]$ denotes the span of a set of functions. The result for $A^1_1$ was shown by Kolmogorov \cite{Kolmogorov:36}. With $r$ an even number the result for $A^r_0$ was shown in \cite{Floater:17}. The remaining cases will be shown in Sections 7 and 8.

Now, let us describe the optimal spline spaces for these sets.
Suppose $\bftau = (\tau_1,\ldots,\tau_m)$ is a knot vector
such that
$$ 0 < \tau_1 < \cdots < \tau_m < 1,$$
and let $I_0 = [0,\tau_1)$,
$I_j = [\tau_j,\tau_{j+1})$,
$j=1,\ldots,m-1$, and $I_m = [\tau_m,1]$.
For any $d \ge 0$, let $\Pi_d$ be the space of polynomials of
degree at most $d$. Then we define the spline space $S_{d,\bftau}$ by
$$ S_{d,\bftau} = \{s \in C^{d-1}[0,1] : s|_{I_j} \in \Pi_d,\, j=0,1,\ldots,m \}, $$
which has dimension $m+d+1$. 
We now define the three $n$-dimensional spline spaces $S_{d,i}$, for $i=0,1,2$, by 
\begin{equation}\label{eq:allS}
\begin{aligned}
 S_{d,0} &= \{s\in S_{d,\bftau_0} : s^{(k)}(0)=s^{(k)}(1)=0,\quad 0\leq k\leq d, \quad k \text{ even}\},\\
 S_{d,1} &= \{s\in S_{d,\bftau_1} : s^{(k)}(0)=s^{(k)}(1)=0,\quad 0\leq k\leq d, \quad k \text{ odd}\},\\
 S_{d,2}&= \{s\in S_{d,\bftau_2} : s^{(k)}(0)=s^{(l)}(1)=0,\quad 0\leq k,l\leq d, \quad k \text{ even}, \quad l \text{ odd}\},
\end{aligned}
\end{equation}
where the knot vectors $\bftau_i$ for $i=0,1,2$, are given as
\begin{equation}\label{eq:alltau}
\begin{aligned}
\bftau_0 &= \begin{cases}
			(\frac{1}{n+1},\frac{2}{n+1},\ldots,\frac{n}{n+1}),\qquad &d \text{ odd},\\
			(\frac{1/2}{n+1},\frac{3/2}{n+1},\ldots,\frac{n+1/2}{n+1}),\quad &d \text{ even},
			\end{cases}\\
\bftau_1 &= \begin{cases}
			(\frac{1/2}{n},\frac{3/2}{n},\ldots,\frac{n-1/2}{n}),\qquad &d \text{ odd},\\
			(\frac{1}{n},\frac{2}{n},\ldots,\frac{n-1}{n}),\qquad &d \text{ even},
			\end{cases}\\
\bftau_2 &= \begin{cases}
			(\frac{1}{2n+1},\frac{3}{2n+1},\ldots,\frac{2n-1}{2n+1}),\quad &d \text{ even},\\
			(\frac{2}{2n+1},\frac{4}{2n+1},\ldots,\frac{2n}{2n+1}),\quad &d \text{ odd}.
			\end{cases}
\end{aligned}
\end{equation}
All these knot vectors have equidistant knots, but if we extend them to include the endpoints of $[0,1]$, the first and last knot intervals of these extended knot vectors sometimes have half the length of the interior ones. Examples of these knot vectors are shown in Figures \ref{fig:Sd1}, \ref{fig:Sd0} and \ref{fig:Sd2}.
Our main result is then the following.
\begin{theorem}\label{thm:A}
 Suppose $r\geq 1$. Then for any $i=0,1,2$, the spline spaces $S_{d,i}$ are optimal $n$-dimensional spaces for the set $A^r_i$ for any $d\geq r-1$.
\end{theorem}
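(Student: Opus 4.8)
The plan is to establish the upper bound $E(A^r_i,S_{d,i})\le d_n(A^r_i)$, since the reverse inequality is immediate from the definition of the $n$-width and $d_n(A^r_i)$ is given explicitly in Theorem~\ref{thm:eig}. First I would dispose of the bookkeeping: check that each $S_{d,i}$ really is $n$-dimensional (the dimension $m+d+1$ of $S_{d,\bftau_i}$ minus the number of imposed even/odd endpoint conditions collapses to $n$ for each parity of $d$), and that the listed knots are uniform.

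The organizing idea, following the single-integration strategy of the introduction, is to write $A^r_i$ as the image of the unit ball of $L^2$ under an $r$-fold integration operator $\mathcal K_i=K\circ\mathcal K_j$, where $K$ is a single integration (suitably normalized at the endpoints) and $\mathcal K_j$ is the $(r-1)$-fold operator attached to a related class $A^{r-1}_j$. The boundary conditions force $K$ to cycle the type index through a fixed permutation $\phi$ (swapping the sine and cosine families for $i=0,1$, and passing to the mirror family for $i=2$), and the knot vectors in \eqref{eq:alltau} are arranged precisely so that $K$ carries the degree-$d$ space $S_{d,j}$ onto the degree-$(d+1)$ space $S_{d+1,i}$ with $\phi(j)=i$, the odd/even shift in \eqref{eq:alltau} being exactly what produces the half-length end intervals. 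I would verify this identity $K(S_{d,j})=S_{d+1,i}$ by direct computation, and record that the uniform knots coincide with the interior zeros of the $(n+1)$-th eigenfunction in \eqref{eigspace0}--\eqref{eigspace2}.

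The proof then runs by induction on $r$. For the inductive step I would pass to the adjoint formulation
\[
 E(A^r_i,S_{d,i})=\sup_{\substack{w\perp S_{d,i}\\ \|w\|=1}}\|\mathcal K_i^{*}w\|,
\]
and use that the inclusion $K(S_{d-1,j})\subseteq S_{d,i}$ forces $K^{*}$ to map $S_{d,i}^{\perp}$ into $S_{d-1,j}^{\perp}$, so that $\mathcal K_i^{*}w=\mathcal K_j^{*}(K^{*}w)$ is controlled by the lower-level class $A^{r-1}_j$. The base cases are the level-one classes $A^1_i$, where the constraint reduces to $\|u'\|\le 1$ together with at most one endpoint condition; here $S_{d,i}$ with its uniform knots must be shown optimal for every degree $d\ge 0$ by a direct argument, the lowest case recovering Kolmogorov's classical computation for $A^1_1$.

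The main obstacle is the analytic core of the inductive step. Naively chaining operator norms is hopeless, since $\|K^{*}\|$ is an $n$-independent constant, whereas the genuine contraction must reproduce exactly the gap between the relevant $n$-widths of Theorem~\ref{thm:eig} (possibly across a shift in the dimension index induced by $\phi$). The real work is therefore to prove that on the \emph{specific} subspace $S_{d,i}^{\perp}$ the adjoint integration contracts by precisely the right factor. This is where total positivity of the Green's kernel and the variation-diminishing (sign-change) structure must be invoked: a function orthogonal to the $n$-dimensional spline space is forced to have enough sign changes that $\mathcal K_i^{*}$ cannot amplify it beyond the $(n+1)$-th singular value, and the placement of the knots at the zeros of the $(n+1)$-th eigenfunction is exactly what makes this bound attained. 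Combining these estimates with the parity bookkeeping for $\phi$ and the knot shifts yields $E(A^r_i,S_{d,i})\le d_n(A^r_i)$, and with the automatic lower bound completes the proof.
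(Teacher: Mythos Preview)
Your inductive skeleton is essentially the paper's, but you have the hard step and the easy step swapped. The paper's inductive step (Lemma~\ref{lem:new}/\ref{lem:new2}) needs no total positivity or sign-change counting whatsoever: once one knows that the relevant subspace is of the form $X_n^d = L^i(X_n^{d-r+1})$ with $L=KK^*$, the identity $(I-Q_n)L^iP_nK=0$ (where $P_n$ projects onto $X_n^{d-r+1}$ and $Q_n$ onto $X_n^d$) gives directly
\[
\|(I-Q_n)L^iK\|_2=\|(I-Q_n)L^i(I-P_n)K\|_2\le\|(I-Q_n)L^i\|_2\,\|(I-P_n)K\|_2,
\]
i.e.\ $E(A^r,X_n^d)\le E(A^{r-1},X_n^d)\,E(A,X_n^{d-r+1})$. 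The contraction factor is thus exactly $E(A,X_n^{d-r+1})=d_n(A)$, \emph{provided} $X_n^{d-r+1}$ is optimal for the level-one class. So the entire burden sits on what you dismissed as the base case ``by a direct argument'': one must show the degree-$0$ space $X_n^0$ and its dual $Y_n^0$ are optimal for the respective $r=1$ problems. This is where the NTP machinery of Melkman--Micchelli enters (for the $i=2$ kernel directly, and via the more delicate argument of \cite[Theorem~5.1]{Melkman:78} and Theorem~\ref{thm:Y} for $i=0,1$, since $K_1$ itself is not NTP). Once those two base spaces are in hand, \cite[Lemma~1]{Floater:17} propagates optimality to all $X_n^d$, $Y_n^d$ for $r=1$, and the purely algebraic projection trick above completes the induction.

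There is also a concrete indexing problem with your permutation~$\phi$ for $i=0,1$. Ordinary integration $K$ does not map $A^{r-1}_1$ into $A^r_0$ (the integral $\int_0^1 u$ need not vanish), and the $n$-widths $1/((n+1)\pi)^r$ and $1/(n\pi)^r$ differ, so you cannot get the correct multiplicative relation by cycling between them. The paper's fix is to work with $K_1=(I-Q)K$, where $Q$ projects onto constants; then both $K_1(B)$ and $K_1^*(B)$ have $n$-width $1/((n+1)\pi)$, the operators alternate cleanly, and the full class $A^r_1=\Pi_0\oplus (K_1K_1^*)^{\lfloor r/2\rfloor}\cdots(B)$ is recovered by adjoining the constants at the end, which accounts for the index shift.
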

The case $A^1_1$ was shown in \cite[Theorem 2]{Floater:17}. On the other hand, the case $A^r_0$ is a generalization of \cite[Theorem 1]{Floater:17} since that theorem only treated even $r$ and spline spaces of degrees $lr-1$ for $l=1,2,\ldots$, thus leaving gaps between the degrees. 
When the degree $d$ is even, the spaces $S_{d,1}$, whose common extended knot vector is equidistant, are the `reduced spline spaces' of Takacs and Takacs \cite[Section 5]{Takacs:2016}. They have also derived approximation results regarding these spaces, using Fourier analysis. We can see from Theorem \ref{thm:A} and \eqref{eq:dnAi} that, for even $d$, the constant $\sqrt{2}$ in \cite[Corollary 5.1]{Takacs:2016} can be replaced by the optimal constant $1/\pi$.

\section{Sets defined by kernels}
We need some properties of kernels, and so this section is similar to \cite[Section 3]{Floater:17}. The starting point of the analysis is to represent the lowest order function classes~$A^1_i$, $i=0,1,2,$ in the form
\begin{equation}\label{eq:Hkernel}
 A = K(B) = \{ K f : \|f\| \le 1 \},
\end{equation}
where $B$ is the unit ball in $L^2$, and $K$ is the integral operator 
$$ K f(x) = \int_0^1 K(x,y) f(y) \, dy. $$
As in \cite{Melkman:78} we use the notation $K(x,y)$ for the kernel of~$K$. We only consider kernels $K(x,y)$ that are continuous or piecewise continuous for $x, y \in [0,1]$. 
Observe that for $A$ in \eqref{eq:Hkernel} and any $n$-dimensional subspace $X_n$ of $L^2$,
\begin{equation}\label{eq:E1}
 E(A,X_n) = \sup_{\|f\| \le 1} \| (I-P_n) K f \| = \|(I-P_n)K\|_2,
\end{equation}
where $P_n$ is the orthogonal projection onto $X_n$, and $\|\cdot\|_2$ denotes the operator norm induced by the $L^2$ norm for functions. 

We will denote by $K^*$ the adjoint, or dual, of the operator $K$,
defined by
$$ (f,K^\ast g) = (Kf, g). $$
The kernel of $K^\ast$ is $K^\ast(x,y) = K(y,x)$. 
Similar to matrix multiplication,
the kernel of the composition of two integral operators $K$ and $L$
is
$$ (KL)(x,y) = (K(x,\cdot),L(\cdot,y)). $$
The operator $K^\ast K$, being self-adjoint and positive semi-definite,
has eigenvalues
\begin{equation}\label{eq:lambda}
\lambda_1 \ge \lambda_2 \ge \cdots \ge \lambda_n \ge \cdots \ge 0,
\end{equation}
and corresponding orthogonal eigenfunctions
\begin{equation}\label{eq:phi}
 K^\ast K \phi_n = \lambda_n \phi_n, \qquad n=1,2,\ldots.
\end{equation}
If we further define $\psi_n = K \phi_n$, then
\begin{equation}\label{eq:psi}
 K K^\ast \psi_n = \lambda_n \psi_n, \qquad n=1,2,\ldots,
\end{equation}
and the $\psi_n$ are also orthogonal.
The square roots of the $\lambda_n$ are known as
the $s$-numbers of $K$ (or $K^*$).
With these definitions we obtain \cite[p.~6 or p.~65]{Pinkus:85}:
\begin{theorem}\label{thm:pinkus}
$d_n(A) =\lambda_{n+1}^{1/2}$, and the space $[\psi_1,\ldots,\psi_n]$
is optimal for $A$.
\end{theorem}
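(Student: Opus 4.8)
The plan is to establish the two inequalities $d_n(A) \le \lambda_{n+1}^{1/2}$ and $d_n(A) \ge \lambda_{n+1}^{1/2}$ separately, the first of these simultaneously exhibiting $[\psi_1,\ldots,\psi_n]$ as an optimal space. First I would normalize the eigenfunctions in \eqref{eq:phi} to $\|\phi_k\| = 1$ and record the relation $\|\psi_k\|^2 = (K\phi_k, K\phi_k) = (K^\ast K\phi_k, \phi_k) = \lambda_k$, so that the nonzero $\psi_k$ are mutually orthogonal with $\|\psi_k\| = \lambda_k^{1/2}$. Since $K$ has a (piecewise) continuous kernel it is Hilbert--Schmidt, hence compact, so $K^\ast K$ is compact and self-adjoint and $\{\phi_k\}$ forms a complete orthonormal system. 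For any $f$ I may then write $f = \sum_k (f,\phi_k)\phi_k$, and applying $K$ gives $Kf = \sum_k (f,\phi_k)\psi_k$, where the terms with $\lambda_k = 0$ drop out.

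For the upper bound I would take $X_n = [\psi_1,\ldots,\psi_n]$ and use \eqref{eq:E1}. Because the $\psi_k$ are orthogonal, the projection $P_n$ retains exactly the first $n$ terms of the expansion of $Kf$, so that $(I-P_n)Kf = \sum_{k>n}(f,\phi_k)\psi_k$ and hence
\[
\|(I-P_n)Kf\|^2 = \sum_{k>n}\lambda_k\,|(f,\phi_k)|^2 \le \lambda_{n+1}\sum_{k>n}|(f,\phi_k)|^2 \le \lambda_{n+1}\|f\|^2,
\]
using the ordering \eqref{eq:lambda}. Equality is attained at $f=\phi_{n+1}$, so $E(A,X_n) = \|(I-P_n)K\|_2 = \lambda_{n+1}^{1/2}$, which yields $d_n(A)\le \lambda_{n+1}^{1/2}$ and already identifies $[\psi_1,\ldots,\psi_n]$ as a candidate optimal space.

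The lower bound is the crux, since it must hold against every $n$-dimensional subspace at once. For an arbitrary $n$-dimensional $X_n$ with orthogonal projection $P_n$, I would consider the $(n+1)$-dimensional space $V = [\phi_1,\ldots,\phi_{n+1}]$ in the domain together with the linear map $f\mapsto P_n Kf$ from $V$ into $X_n$. Because $\dim V = n+1 > n = \dim X_n$, this map has a nontrivial kernel, so there exists $f\in V$ with $\|f\|=1$ and $P_n Kf = 0$. Then $Kf\in A$, and since $P_n$ gives the best $L^2$ approximation from $X_n$,
\[
\inf_{v\in X_n}\|Kf-v\| = \|(I-P_n)Kf\| = \|Kf\|.
\]
Writing $f = \sum_{k=1}^{n+1}(f,\phi_k)\phi_k$ and using orthogonality of the $\psi_k$ together with \eqref{eq:lambda} gives $\|Kf\|^2 = \sum_{k=1}^{n+1}\lambda_k\,|(f,\phi_k)|^2 \ge \lambda_{n+1}$. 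Hence $E(A,X_n)\ge \lambda_{n+1}^{1/2}$ for every such $X_n$, and therefore $d_n(A)\ge \lambda_{n+1}^{1/2}$.

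Combining the two bounds gives $d_n(A) = \lambda_{n+1}^{1/2}$ with $[\psi_1,\ldots,\psi_n]$ optimal. The main obstacle is the dimension-counting step in the lower bound: the upper bound is a direct computation in the singular basis, whereas the lower bound relies on the observation that the $(n+1)$-dimensional image of the top eigenfunctions cannot be collapsed by any $n$-dimensional subspace without leaving uncaptured a vector of norm at least $\lambda_{n+1}^{1/2}$. The degenerate case $\lambda_{n+1}=0$ is immediate, since then the upper bound already forces $d_n(A)=0$.
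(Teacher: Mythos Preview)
Your proof is correct and is essentially the standard singular-value argument. The paper itself does not prove this theorem at all: it is stated as a known result and simply cited to Pinkus~\cite[p.~6 or p.~65]{Pinkus:85}. What you have written is, in substance, the proof one finds there --- the upper bound by projecting onto the span of the first $n$ singular vectors $\psi_k$, and the lower bound by a rank-nullity argument against $[\phi_1,\ldots,\phi_{n+1}]$ --- so there is nothing to contrast.
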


\section{Totally Positive Kernels}

Melkman and Micchelli \cite{Melkman:78} proved that if $K$ is nondegenerate totally positive (NTP) \cite[p.~108]{Pinkus:85}, then there are in fact two other optimal subspaces for $A$. Specifically, if $K$ is NTP it follows from a theorem of Kellogg~\cite[p.~109]{Pinkus:85} that
the eigenvalues of $K^\ast K$ and $KK^\ast$ in (\ref{eq:phi}) and
(\ref{eq:psi}) are positive and
simple, $\lambda_1 > \lambda_2 > \cdots > \lambda_n > \cdots > 0$,
and the eigenfunctions $\phi_{n+1}$
and $\psi_{n+1}$
have exactly $n$ simple zeros in $(0,1)$,
$$ \phi_{n+1}(\xi_j) = \psi_{n+1}(\eta_j) = 0, \quad j=1,2,\ldots,n, $$
$$ 0 < \xi_1 < \xi_2 < \cdots < \xi_n < 1, \qquad
   0 < \eta_1 < \eta_2 < \cdots < \eta_n < 1. $$
Melkman and Micchelli \cite[Theorem 2.3]{Melkman:78} then proved that the spaces
\begin{equation}\label{eq:Xn0}
\begin{aligned}
X_n^0&=[K(\cdot,\xi_1),\ldots,K(\cdot,\xi_n)],\\
X_n^1&= [(KK^*)(\cdot,\eta_1),\ldots, (KK^*)(\cdot,\eta_n)]
\end{aligned}
\end{equation}
are optimal for $A$. Using a duality technique that we will discuss in the next section it was later shown in \cite[Theorem 5]{Floater:17} that, given these two optimal spaces, there is an optimal space $X_n^d$, for all $d=0,1,2,\ldots$, where
\begin{equation}\label{eq:Xn}
 X_n^d = \begin{cases}
  [(KK^\ast)^i K(\cdot,\xi_1),\ldots, (KK^\ast)^i K(\cdot,\xi_n)],
     & d = 2i, \cr
  [(KK^\ast)^{i+1}(\cdot,\eta_1),\ldots, (KK^\ast)^{i+1}(\cdot,\eta_n)],
     & d=2i+1.
   \end{cases}
\end{equation}

Melkman and Micchelli also constructed two optimal subspaces for the set $A$ even when $K$ is not NTP, but for $K$ satisfying some related properties. We will deal with such a situation in Section 8.

\section{Further optimality results}
In this section we describe how optimal subspaces for the set $A$ in \eqref{eq:Hkernel} can be used to find optimal subspaces for sets of the form $K^*(A)$, $KK^*(A)$, and so on. The results here will hold for any integral operator $K$.

To ease notation we define two function classes $A^r$ and $A^r_*$, for $r\geq 1$, by
\begin{equation}\label{eq:Arseq}
A^r=\begin{cases}
     (KK^*)^iK(B), & r=2i+1,\\
    (KK^*)^{i}(B),  & r = 2i,
   \end{cases}\qquad 
A^r_*=\begin{cases}
     (K^*K)^iK^*(B), & r=2i+1,\\
    (K^*K)^{i}(B),  & r = 2i.
   \end{cases} 
\end{equation}
Observe that both $A^r$ and $A^r_*$ are defined by alternately applying the operators $K$ and $K^*$, $r$ times, to the unit ball $B$, with $K$ always being the left-most operator for $A^r$, and $K^*$ always being the left-most operator for $A^r_*$. Since $A^1=A$, we will write $A_*$ when referring to $A_*^1$. As we shall see momentarily the duality between the operators $K$ and $K^*$ will play an important role for the sets $A^r$ and $A^r_*$, and especially their respective optimal subspaces. In some sense their optimal subspaces could be considered `dual' to each other.

Since eigenvalues of powers of $KK^*$ (and $K^*K$) are just powers of the $\lambda_n$ in \eqref{eq:lambda}, with the same corresponding eigenfunction, it follows that the $n$-widths of the sets $A^r$ and $A^r_*$ are given by
\begin{equation}\label{eq:dnAr}
d_n(A^r_*)=d_n(A^r) = d_n(A)^r,
\end{equation}
and the space $[\psi_1,\ldots,\psi_n]$ in Theorem \ref{thm:pinkus} is optimal for $A^r$, and the space $[\phi_1,\ldots,\phi_n]$ is optimal for $A^r_*$. As a tool for finding further optimal subspaces for $A^r$ and $A^r_*$, with $r\geq 2$, we start with the following lemma.

\begin{lemma}\label{lem:new}
For any integral operator $K$, let $L=KK^*$. If $X_n$ and $Y_n$ are any subspaces of $L^2$, then
\begin{equation*}
\begin{aligned}
E(A^r, L^i(X_n)) &\leq E(A,X_n)E(A^{r-1},L^i(X_n)), \qquad &&r=2i+1,\\
E(A^r, L^{i-1}K(Y_n)) &\leq E(A_*,Y_n)E(A^{r-1},L^{i-1}K(Y_n)), \qquad &&r=2i,
\end{aligned}
\end{equation*}
for $r\geq 2$.
\end{lemma}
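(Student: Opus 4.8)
The goal is to bound $E(A^r, \cdot)$ from above by a product of two smaller errors, so the natural strategy is to write the relevant operator norm as a composition and apply submultiplicativity of the operator norm together with the characterization of $E$ in \eqref{eq:E1}. Let me focus on the odd case $r=2i+1$; the even case is entirely analogous with the roles of $K$ and $K^*$ swapped.

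First I would recall from \eqref{eq:Arseq} that $A^r = (KK^*)^i K(B) = L^i K(B)$, where $L = KK^*$. Hence, writing $Q_n = I - P_n^{(i)}$ for the orthogonal projection complement onto the space $X_n^{(i)} := L^i(X_n)$, the error $E(A^r, L^i(X_n))$ equals, by the same reasoning that gave \eqref{eq:E1},
\begin{equation*}
E(A^r, L^i(X_n)) = \sup_{\|f\|\le 1}\|(I-P_n^{(i)}) L^i K f\| = \|(I-P_n^{(i)}) L^i K\|_2.
\end{equation*}
The plan is then to insert the factorization $L^i K = (L^i)\cdot K$ but, crucially, to split off a single copy of $K$ on the right and recognize the remaining factor as the operator attached to $A^{r-1}$. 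Since $r-1 = 2i$, we have $A^{r-1} = L^i(B)$, and so $E(A^{r-1}, L^i(X_n)) = \|(I-P_n^{(i)})L^i\|_2$. The key identity I want is therefore
\begin{equation*}
(I-P_n^{(i)}) L^i K = \bigl[(I-P_n^{(i)}) L^i\bigr]\,K,
\end{equation*}
after which submultiplicativity gives $\|(I-P_n^{(i)})L^i K\|_2 \le \|(I-P_n^{(i)})L^i\|_2\,\|K\|_2$, and one identifies $\|K\|_2 = E(A, X_n)$... but that last identification is exactly where care is needed.

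The main obstacle is that $\|K\|_2$ is \emph{not} the same as $E(A,X_n)$ in general; rather, $E(A,X_n) = \|(I-P_n)K\|_2$ involves the projection onto $X_n$ itself, not onto $L^i(X_n)$. So the naive split above pulls out the wrong factor. The correct approach is to group differently: I want to peel off the rightmost $(I-P_n)K$ factor. The idea is to use that $P_n^{(i)}$ projects onto $L^i(X_n)$, and to exploit a relation of the form $L^i(I-P_n) = (I-P_n^{(i)})L^i$ acting appropriately, or more robustly, to write the composition so that one factor is $(I-P_n)K$ (giving $E(A,X_n)$) and the other is an operator whose norm is $E(A^{r-1}, L^i(X_n))$. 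Getting these projections to line up is the crux. I expect the resolution to hinge on the elementary fact that for any bounded operator $T$ and any subspace, $\|(I-P)T\|_2 \le \|T\|_2$ is too lossy, so instead one writes $L^i K = L^i K(I-P_n) + L^i K P_n$ and observes that the second term lands in a subspace contained in (or handled by) $L^i(X_n)$, so that $(I-P_n^{(i)})L^i K P_n = 0$ or is otherwise absorbed. Thus
\begin{equation*}
(I-P_n^{(i)})L^i K = (I-P_n^{(i)}) L^i K (I-P_n),
\end{equation*}
and then submultiplicativity yields
\begin{equation*}
\|(I-P_n^{(i)})L^i K\|_2 \le \|(I-P_n^{(i)})L^i\|_2\,\|K(I-P_n)\|_2 = E(A^{r-1}, L^i(X_n))\,E(A, X_n),
\end{equation*}
using that $\|K(I-P_n)\|_2 = \|(I-P_n)K^*\|_2$ relates to $E(A, X_n)$ via the adjoint. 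Verifying the vanishing identity $(I-P_n^{(i)})L^i K P_n = 0$ — i.e.\ that $L^i K P_n$ maps into $L^i(X_n)$ — is the one genuinely substantive step, and I would prove it by noting $K P_n f \in K(X_n)$ so $L^i K P_n f \in L^i K(X_n) \subseteq L^i(X_n)$ only if $K(X_n)\subseteq X_n$, which need not hold; the honest fix is to absorb the extra $K$ into the definition of the projected space, matching exactly how $A^{r-1}$ and its optimal space were set up in \eqref{eq:Arseq}. Once the bookkeeping of which projection attaches to which operator is settled, both displayed inequalities follow immediately.
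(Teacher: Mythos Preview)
Your overall strategy --- insert a projection to split the composite operator and then apply submultiplicativity of the operator norm --- is exactly the paper's approach. But your execution has a concrete gap that you yourself flag and then do not actually close.

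You try to write $L^iK = L^iK(I-P_n) + L^iKP_n$ and to show $(I-P_n^{(i)})L^iKP_n = 0$, which would require $K(X_n)\subseteq X_n$; you correctly note this fails in general, and your ``honest fix'' is left as a remark rather than an argument. Moreover, even if that vanishing held, the resulting factor $\|K(I-P_n)\|_2$ equals $\|(I-P_n)K^*\|_2 = E(A_*,X_n)$, not $E(A,X_n)$, so you would end up with the wrong right-hand side.

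The paper's resolution is simply to place the projection $P_n$ \emph{between} $L^i$ and $K$, not after $K$. With $Q_n$ the orthogonal projection onto $L^i(X_n)$ and $P_n$ the orthogonal projection onto $X_n$, one has $P_nKf\in X_n$ for every $f$, hence $L^iP_nKf\in L^i(X_n)$, and therefore
\[
(I-Q_n)L^iP_nK = 0.
\]
No invariance hypothesis on $X_n$ is needed. Subtracting this zero operator gives
\[
(I-Q_n)L^iK = (I-Q_n)L^i(I-P_n)K,
\]
and now submultiplicativity produces the two factors in the correct order:
\[
\|(I-Q_n)L^iK\|_2 \le \|(I-Q_n)L^i\|_2\,\|(I-P_n)K\|_2 = E(A^{r-1},L^i(X_n))\,E(A,X_n).
\]
The even case $r=2i$ is identical with $K^*$ peeled off on the right and $Y_n$ in place of $X_n$. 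The whole point is that the intermediate projection goes onto the \emph{original} space $X_n$ (or $Y_n$), so that applying $L^i$ (or $L^{i-1}K$) afterwards lands in the target space automatically.
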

\begin{proof}
First assume $r=2i+1$, for $i\geq 1$. From the definition of $A^r$ we have $A^r=L^iK(B)$ and $A^{r-1}=L^i(B)$. 

Let $P_n$ be the $L^2$ projection onto $X_n$, and let $Q_n$ be $L^2$ projection onto $L^i(X_n)$. 
Then 
$$L^iP_nKf\in L^i(X_n),$$ 
for all $f\in L^2$,
and so $$(I-Q_n)L^iP_nK=0.$$  
Thus, by using equation \eqref{eq:E1}, we find that
 \begin{align*}
 E(A^r, L^i(X_n))&=\|(I-Q_n)L^iK\|_2 = \|(I-Q_n)L^iK - (I-Q_n)L^iP_nK\|_2,\\
    &=\|(I-Q_n)L^i(I-P_n)K\|_2
    \leq \|(I-Q_n)L^i\|_2\|(I-P_n)K \|_2,\\
    &=E(A^{r-1},L^i(X_n))E(A, X_n).
\end{align*}

Next, assume $r=2i$, for $i\geq 1$. Then $A^r=L^{i}(B)$ and $A^{r-1}=L^{i-1}K(B)$. In this case, let $P_n$ be the $L^2$ projection onto $Y_n$, and let $Q_n$ be $L^2$ projection onto $L^{i-1}K(Y_n)$. 
Then, as before,
$$(I-Q_n)L^{i-1}KP_nK^*=0,$$
and the result follows by an almost identical argument as in the previous case.
\end{proof}

Now suppose that $X_n^0$ is an optimal $n$-dimensional subspace for $A$, and $Y_n^0$ is an optimal $n$-dimensional subspace for $A_*$. With these two subspaces one can generate a whole sequence of subspaces $X_n^d$ and $Y_n^d$, by 
\begin{equation}\label{eq:prop}
\begin{aligned}
X_n^{d}&=K(Y_n^{d-1}),\qquad Y_n^{d}&=K^*(X_n^{d-1}),
\end{aligned}
\end{equation}
for all $d=1,2,3,\ldots$, and it follows from \cite[Lemma 1]{Floater:17} that all the $X_n^d$ are optimal for the $n$-width of $A^1=A$, and all the $Y_n^d$ are optimal for the $n$-width of $A_*^1=A_*$. 
Note that for $d>0$, the spaces $X_n^d$ and $Y_n^d$ could in general have dimension less than $n$, but they are still optimal for the $n$-width problem. In fact, if $X_n^d$ or $Y_n^d$ have dimension $m$, $0\leq m<n$, then $d_m(A)$ must equal $d_n(A)$ by definition of the $n$-width.

Next, we consider $A^r$ and $A^r_*$ for $r\geq 2$.

\begin{lemma}\label{lem:new2}
Suppose the subspace $X_n^0$ is optimal for $A$ and $Y_n^0$ is optimal for $A_*$. Then, for $r\geq 2$, 
\begin{align}
E(A^r, X_n^d) \leq d_n(A)E(A^{r-1},X_n^d),\label{ineq:Xnd}\\
E(A^r_*, Y_n^d) \leq d_n(A)E(A_*^{r-1},Y_n^d)\label{ineq:Ynd},
\end{align}
for all $d\geq r-1$.
\end{lemma}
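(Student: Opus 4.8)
The plan is to reduce both inequalities to Lemma \ref{lem:new} by rewriting the recursively defined spaces $X_n^d$ and $Y_n^d$ in the canonical forms $L^i(X_n)$ and $L^{i-1}K(Y_n)$ (with $L=KK^*$) appearing on the right-hand sides of that lemma, and by choosing the inner space so that the leading factor becomes exactly $d_n(A)$. First I would observe that the recursion \eqref{eq:prop} gives $X_n^d = K(Y_n^{d-1}) = KK^*(X_n^{d-2}) = L(X_n^{d-2})$ for $d\geq 2$, and hence by iteration the identities
$$X_n^d = L^i(X_n^{d-2i}), \qquad X_n^d = L^{i-1}K(Y_n^{d-2i+1}),$$
valid for $d\geq 2i$ and $d\geq 2i-1$ respectively. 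The point of the hypothesis $d\geq r-1$ is precisely that it forces the shifted index ($d-2i$ or $d-2i+1$) to be nonnegative, so that the inner space is one of the spaces generated from \eqref{eq:prop}, and therefore, by the remark following \eqref{eq:prop}, optimal for $A$ or for $A_*$.

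To prove \eqref{ineq:Xnd} I would split on the parity of $r$. If $r=2i+1$ then $A^r = L^iK(B)$ and $A^{r-1}=L^i(B)$; writing $X_n^d = L^i(X_n^{d-2i})$ with $d-2i\geq 0$ and applying the first case of Lemma \ref{lem:new} with the arbitrary subspace taken to be $X_n^{d-2i}$ gives
$$E(A^r,X_n^d)\leq E(A,X_n^{d-2i})\,E(A^{r-1},X_n^d),$$
and since $X_n^{d-2i}$ is optimal for $A$ the first factor is $d_n(A)$, which is the claim. If $r=2i$ then $A^r=L^i(B)$ and $A^{r-1}=L^{i-1}K(B)$; writing $X_n^d = L^{i-1}K(Y_n^{d-2i+1})$ with $d-2i+1\geq 0$ and applying the second case of Lemma \ref{lem:new} with $Y_n = Y_n^{d-2i+1}$ produces a factor $E(A_*,Y_n^{d-2i+1})$, which equals $d_n(A_*)=d_n(A)$ by optimality of $Y_n^{d-2i+1}$ for $A_*$ together with \eqref{eq:dnAr}. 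In both cases the remaining factor is $E(A^{r-1},X_n^d)$, because the very space $X_n^d$ reappears on the right of Lemma \ref{lem:new}, so \eqref{ineq:Xnd} follows.

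The second inequality \eqref{ineq:Ynd} follows from the first by the symmetry between $K$ and $K^*$. Replacing $K$ by $K^*$ throughout interchanges $A^r$ with $A^r_*$, interchanges the roles of $X_n^d$ and $Y_n^d$ in the recursion \eqref{eq:prop}, and replaces $L=KK^*$ by $K^*K$; since Lemma \ref{lem:new} is stated for an arbitrary integral operator and $d_n(A_*)=d_n(A)$, the argument of the previous paragraph applies verbatim to the starred objects and yields \eqref{ineq:Ynd}.

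I expect the only genuine work to lie in the first paragraph: establishing the index-shifted identities for $X_n^d$ and confirming that $d\geq r-1$ is the sharp condition ensuring the inner index is nonnegative. Once the spaces are in the canonical form of Lemma \ref{lem:new} the estimates are immediate, the single subtlety being that the leading factor must be recognized as an optimality statement, namely $d_n(A)$ (equivalently $d_n(A_*)$), rather than merely as an upper bound.
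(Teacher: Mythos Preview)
Your proposal is correct and follows essentially the same approach as the paper: you unwind the recursion \eqref{eq:prop} to write $X_n^d$ in the form $L^i(X_n^{d-r+1})$ or $L^{i-1}K(Y_n^{d-r+1})$, apply Lemma~\ref{lem:new} with the inner space being one of the generated spaces (hence optimal for $A$ or $A_*$ by the remark after \eqref{eq:prop}), and then obtain \eqref{ineq:Ynd} by the $K\leftrightarrow K^*$ symmetry. The only cosmetic difference is that the paper writes the shifted index uniformly as $d-r+1$ rather than $d-2i$ and $d-2i+1$.
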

\begin{proof}
We start by proving inequality \eqref{ineq:Xnd}.
 Let $L=KK^*$. First, assume $r=2i+1$, for $i\geq 1$. It then follows from \eqref{eq:prop} that $X_n^d=L^i(X_n^{d-r+1})$ for $d\geq r-1$, and so the result follows from Lemma \ref{lem:new}, with $X_n=X_n^{d-r+1}$,
since $X_n^{d-r+1}$ is optimal for $A$.

Next, assume $r=2i$, for $i\geq 1$. It then follows from \eqref{eq:prop} that $X_n^d=L^iK(Y_n^{d-r+1})$ for $d\geq r-1$, and so the result follows from Lemma \ref{lem:new}, with $Y_n=Y_n^{d-r+1}$,
since $Y_n^{d-r+1}$ is optimal for $A_*$ and $d_n(A_*)=d_n(A)$.

Inequality \eqref{ineq:Ynd} then follows from the same argument if we interchange the roles of $K$ and $K^*$.
\end{proof}

Using Lemma \ref{lem:new2}, we now obtain optimality results for $A^r$ and $A^r_*$, for all $r\geq 1$.

\begin{theorem}\label{thm:new}
Suppose the subspace $X_n^0$ is optimal for $A$ and $Y_n^0$ is optimal for $A_*$. Then, for $r\geq 1$,
\begin{itemize}
 \item the subspaces $X_n^d$ in \eqref{eq:prop} are optimal for the $n$-width of $A^{r}$, and
 \item the subspaces $Y_n^d$ in \eqref{eq:prop} are optimal for the $n$-width of $A^{r}_*$,
\end{itemize}
for all $d\geq r-1$.
\end{theorem}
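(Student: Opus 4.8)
The plan is to prove the statement by induction on $r$, keeping the degree $d$ fixed throughout each step of the induction. I would carry out the argument for $X_n^d$ in full; the claim for $Y_n^d$ then follows by the identical reasoning with the roles of $K$ and $K^*$ interchanged, exactly as in the last line of Lemma \ref{lem:new2}.

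For the base case $r=1$ there is nothing new to establish: the optimality of every $X_n^d$ for $A^1=A$, and of every $Y_n^d$ for $A_*^1=A_*$, was already recorded immediately after \eqref{eq:prop} as a consequence of \cite[Lemma 1]{Floater:17}. Hence I may assume $r\geq 2$ and that the result holds for $r-1$.

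For the inductive step, fix $d\geq r-1$. Since $d\geq r-1\geq (r-1)-1$, the inductive hypothesis applies at this same $d$ and tells us that $X_n^d$ is optimal for $A^{r-1}$, that is, $E(A^{r-1},X_n^d)=d_n(A^{r-1})=d_n(A)^{r-1}$ by \eqref{eq:dnAr}. Substituting this into inequality \eqref{ineq:Xnd} of Lemma \ref{lem:new2} gives
\begin{equation*}
E(A^r,X_n^d)\leq d_n(A)\,E(A^{r-1},X_n^d)=d_n(A)\cdot d_n(A)^{r-1}=d_n(A)^r=d_n(A^r),
\end{equation*}
the final equality again by \eqref{eq:dnAr}. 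For the reverse inequality I would observe that $X_n^d$ is a subspace of $L^2$ of dimension at most $n$; padding it up to an $n$-dimensional space can only decrease the distance $E(A^r,\cdot)$, so $E(A^r,X_n^d)\geq d_n(A^r)$ by the definition of the $n$-width. Combining the two bounds yields $E(A^r,X_n^d)=d_n(A^r)$, so $X_n^d$ is optimal for $A^r$, which closes the induction.

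The argument is essentially a clean induction, so I do not expect a serious obstacle; the one point that requires care is the interplay between the exponent $r$ and the degree $d$. The decisive feature of Lemma \ref{lem:new2} is that it lowers $r$ by one while leaving the space $X_n^d$ unchanged, so the induction runs at a single fixed $d$ rather than over a shifting family of spaces. I would double-check that the admissible range $d\geq r-1$ contracts correctly as $r$ decreases (so that the hypothesis is genuinely available), and that the lower bound is handled properly in the case where $X_n^d$ has dimension strictly less than $n$.
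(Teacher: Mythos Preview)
Your proof is correct and follows essentially the same approach as the paper's own argument: induction on $r$ with the base case $r=1$ supplied by \cite[Lemma~1]{Floater:17}, and the inductive step driven by inequality~\eqref{ineq:Xnd} of Lemma~\ref{lem:new2} together with the multiplicativity relation $d_n(A^r)=d_n(A)^r$ from~\eqref{eq:dnAr}. You have simply spelled out in more detail what the paper compresses into a couple of lines, including the lower bound $E(A^r,X_n^d)\geq d_n(A^r)$ and the bookkeeping on the admissible range of $d$.
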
 
\begin{proof}
The case $r=1$ follows from \cite[Lemma 1]{Floater:17}. For $r\geq 2$ the result for the $X_n^d$ follows from inequality \eqref{ineq:Xnd} in Lemma \ref{lem:new2}, equation \eqref{eq:dnAr} and induction on $r$, since $d_n(A)^r=d_n(A)d_n(A)^{r-1}$. Similarly, now using inequality \eqref{ineq:Ynd} in Lemma \ref{lem:new2}, we get the result for the $Y_n^d$ as well.
\end{proof}

\begin{figure}[H]
\centering\captionsetup{width=.7\linewidth}
\begin{tabular}{rrrrr}
$X_n^0$ & $Y_n^1$ & $X_n^2$ & $Y_n^3$ & $\cdots$ \\
\hline
$A^1$ & $A_*^1$ & $A^1$ & $A_*^1$ & $\cdots$ \\
      & $A_*^2$ & $A^2$ & $A_*^2$ & $\cdots$ \\
      &         & $A^3$ & $A_*^3$ & $\cdots$ \\
      &         &       & $A_*^4$ & $\cdots$
\end{tabular}\qquad
\begin{tabular}{rrrrr}
$Y_n^0$ & $X_n^1$ & $Y_n^2$ & $X_n^3$ & $\cdots$ \\
\hline
$A_*^1$ & $A^1$ & $A_*^1$ & $A^1$ & $\cdots$ \\
        & $A^2$ & $A_*^2$ & $A^2$ & $\cdots$ \\
        &       & $A_*^3$ & $A^3$ & $\cdots$ \\
        &       &         & $A^4$ & $\cdots$
\end{tabular}
\caption{Optimality results.}
\label{fig:optimality}
\end{figure}
We have summarized the statement of Theorem \ref{thm:new} in Figure \ref{fig:optimality}. Under the assumption of Theorem \ref{thm:new} on $X_n^0$ and $Y_n^0$, all the spaces (above the line) in the two tables are optimal for all the function classes below them.
Optimality of $X_n^0$ for $A^1$ implies optimality of $Y_n^1$ for $A^1_*$ by \cite[Lemma 1]{Floater:17}, and so on along the first row (below the line) in the left table. Then, by Lemma \ref{lem:new2}, optimality of $X_n^0$ for $A^1$, and $Y_n^1$ for $A_*^1$, imply optimality of $Y_n^1$ for $A^2_*$, and so on along the second row. Optimality of $X_n^0$ for $A^1$, and $X_n^2$ for $A^2$, imply optimality of $X_n^2$ for $A^3$, and so on along the third row. 
Similarly for the right table.

Let us now turn back to the case where $K$ is NTP. The subspace $X_n^0$ in \eqref{eq:Xn0} is optimal for $A$, and since $K$ being NTP is equivalent to $K^*$ being NTP, we also have that the subspace 
\begin{equation}\label{eq:Yn0}
Y_n^0=[K^*(\cdot,\eta_1),\ldots, K^*(\cdot,\eta_n)]
\end{equation}
is optimal for $A_*$, and so we can apply Theorem~\ref{thm:new}. The subspaces $X_n^d$ in \eqref{eq:prop} are in this case the same as those in equation \eqref{eq:Xn}. Since the eigenvalues \eqref{eq:lambda} (and thus also the $n$-widths) are strictly decreasing whenever $K$ is NTP, the subspaces $X_n^d$ and $Y_n^d$ are in this case also $n$-dimensional for all $d\geq 0$.

\section{Mixed boundary conditions}
In this section we study the $n$-width problem for the function class $A^r_2$ in \eqref{eq:allA}. Consider the operator $K$ given by
\begin{equation}\label{eq:integralK}
Kf(x)=\int_0^xf(y)d y = \int_0^1K(x,y)f(y)dy,
\end{equation}
whose kernel is
\begin{equation}\label{eq:lowestK}
 K(x,y)= \begin{cases} 0 & x < y, \cr
                         1 & x \ge y.
           \end{cases}
\end{equation}
Using the equality $K^*(x,y)=K(y,x)$, we find that
\begin{equation}\label{eq:integralKs}
K^*f(x)=\int_x^1f(y)d y.
\end{equation}
Thus $K$ represents integration from the left, while $K^*$ represents integration from the right.

From \eqref{eq:integralK} we see that the set $A^1_{2}$ in equation \eqref{eq:allA} can be expressed as
\begin{align*}
 A^1_{2}  =\{ u\in H^1 : \|u'\|\leq 1, \,\, u(0)=0\} = \{\int_0^x f(y) d y : \|f\|\leq 1\} = K(B). 
\end{align*}
To see that the remaining $A^r_2$ can be expressed in terms of $K$ and $K^*$,
it is convenient to recognize the kernel of the
composition $KK^*$ as the Green's function for a boundary value problem,
whose eigenfunctions we will need later anyway [in equation \eqref{eq:Kpsi}].
\begin{lemma}\label{lem:one}
If $u(x) = KK^* f(x)$ then
$u$ is the unique solution to the boundary value problem
\begin{align}\label{eq:bvp1}
 - u''(x) = f(x), \quad u(0) = u'(1) = 0.
\end{align}
\end{lemma}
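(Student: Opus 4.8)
The plan is to establish the lemma in two independent pieces—that $u=KK^\ast f$ solves the boundary value problem, and that the solution is unique—both by entirely elementary means, since each follows directly from the definitions \eqref{eq:integralK} and \eqref{eq:integralKs} of $K$ and $K^\ast$.

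First I would write $u$ out explicitly as the iterated integral
\[
u(x) = K(K^\ast f)(x) = \int_0^x (K^\ast f)(y)\,dy = \int_0^x\!\Big(\int_y^1 f(z)\,dz\Big)\,dy.
\]
Differentiating once, by the fundamental theorem of calculus, gives $u'(x) = (K^\ast f)(x) = \int_x^1 f(z)\,dz$, and differentiating a second time gives $u''(x) = -f(x)$, which is precisely $-u''=f$. The boundary conditions then drop out by inspection: $u(0)=\int_0^0 (K^\ast f) = 0$, while $u'(1)=(K^\ast f)(1)=\int_1^1 f = 0$. This confirms that $u=KK^\ast f$ is a solution.

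For uniqueness I would appeal to linearity. If $u_1$ and $u_2$ both solve \eqref{eq:bvp1}, then $v=u_1-u_2$ satisfies the homogeneous problem $-v''=0$ with $v(0)=v'(1)=0$. Integrating $v''=0$ twice shows $v$ is affine, say $v(x)=ax+b$; the condition $v'(1)=a=0$ forces $v$ to be constant, and $v(0)=b=0$ then forces $v\equiv 0$, so $u_1=u_2$.

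I do not expect a genuine obstacle here; the only point needing a word of care is the regularity required to justify the two differentiations. Since $f\in L^2(0,1)$, the function $K^\ast f$ is absolutely continuous (indeed in $H^1$), so $u=K(K^\ast f)$ lies in $H^2(0,1)$, the identity $-u''=f$ holds almost everywhere, and $u'$ is continuous up to $x=1$ so that $u'(1)$ is meaningful. With this understood the verification above is completely routine.
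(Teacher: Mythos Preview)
Your proof is correct and follows essentially the same route as the paper's: both differentiate through $K$ and $K^\ast$ using the fundamental theorem of calculus to obtain $-u''=f$, read off the boundary values directly from the integral representations, and prove uniqueness by showing the homogeneous problem forces an affine function to vanish. Your added remark on $H^2$ regularity is a nice touch the paper omits, but otherwise the arguments are the same.
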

\begin{proof}
We see from \eqref{eq:integralK} and \eqref{eq:integralKs} that
for any $h$,
\begin{align}
 (Kh)'(x) &= h(x), \label{eq:Kh} \\
 (K^* h)'(x) &= - h(x). \nonumber
\end{align}
So, if  $u(x) = KK^* f(x)$ then $- u''(x) = f(x)$.
For the left boundary condition, from (\ref{eq:integralK}), we find that
$$ u(0)=(KK^*f)(0)=0. $$
For the right boundary condition,
from (\ref{eq:Kh}) and (\ref{eq:integralKs}),
$$ u'(1)=(KK^*f)'(1)=(K^*f)(1)=0. $$
To see that $u$ is unique, suppose $f=0$ in \eqref{eq:bvp1}. 
Then $u$ must be a linear function, but to satisfy the boundary conditions we must have $u=0$.
\end{proof}

By applying the above lemma to functions $f$ in $B$ and $K(B)$
respectively and repeating the procedure $i$ times, we find that
\begin{equation*}
A^{2i}_{2} = (KK^*)^{i}(B),\qquad A^{2i+1}_{2} = (KK^*)^{i}K(B),
\end{equation*}
where $A^{2i}_{2}$ and $A^{2i+1}_{2}$ are as in equation \eqref{eq:allA}. Observe that the left-most operator for the function class $A^r_2$ is always $K$, and so $A^r_2$ is an instance of $A^r$ in \eqref{eq:Arseq}.

\subsection{Proof of Theorem \ref{thm:eig} for $A^r_2$}
In analogy to Lemma \ref{lem:one} we have,
for the other composition $K^*K$,
\begin{lemma}\label{lem:two}
If $u(x) = K^*K f(x)$ then
$u$ is the unique solution to the boundary value problem
\begin{align*}
 - u''(x) = f(x), \quad u'(0) = u(1) = 0.
\end{align*}
\end{lemma}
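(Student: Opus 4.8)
The plan is to mirror the proof of Lemma \ref{lem:one}, interchanging the roles of $K$ and $K^*$, since the composition is now taken in the opposite order. The essential tools are the two derivative identities recorded in that proof, namely $(Kh)'(x)=h(x)$ from \eqref{eq:Kh} and $(K^*h)'(x)=-h(x)$. First I would set $g=Kf$ so that $u=K^*g$, and differentiate twice: one differentiation gives $u'(x)=(K^*g)'(x)=-g(x)=-(Kf)(x)$, and a second gives $u''(x)=-(Kf)'(x)=-f(x)$, which is the desired equation $-u''=f$.

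Next I would verify the two boundary conditions, taking care that they now appear at the ends opposite to those in Lemma \ref{lem:one}. For the condition at $0$, the identity $u'(x)=-(Kf)(x)$ gives $u'(0)=-(Kf)(0)=0$ directly from \eqref{eq:integralK}, since $Kf(0)=\int_0^0 f=0$. For the condition at $1$, we have $u(1)=(K^*Kf)(1)=0$ from \eqref{eq:integralKs}, since $K^*$ applied to any function vanishes at $x=1$. Finally, for uniqueness I would argue exactly as in Lemma \ref{lem:one}: if $f=0$ then $u''=0$, so $u$ is affine, $u(x)=ax+b$; the condition $u'(0)=0$ forces $a=0$, and then $u(1)=0$ forces $b=0$, so $u\equiv 0$.

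I do not expect a genuine obstacle, as the argument is routine and structurally identical to the previous lemma. The only point requiring attention is bookkeeping: keeping straight that, with $K^*$ now the outer operator, it is the \emph{left} condition that becomes the Neumann-type condition $u'(0)=0$ and the \emph{right} condition that becomes the Dirichlet-type condition $u(1)=0$, exactly reversed from Lemma \ref{lem:one}.
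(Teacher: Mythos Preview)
Your proof is correct and is exactly the approach the paper intends: the paper does not spell out a proof for this lemma but simply prefaces it with ``In analogy to Lemma~\ref{lem:one}'', leaving the reader to carry out precisely the role-swap of $K$ and $K^*$ that you describe.
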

From Lemma \ref{lem:two}, we see that 
the eigenvalues and eigenfunctions of $K^*K$ are
\begin{equation}\label{eq:Kphi}
\begin{aligned}
\lambda_n = \frac{1}{(n-1/2)^2 \pi^2}, \qquad \phi_n(x) = \cos (n-1/2) \pi x, \qquad n=1,2,\ldots. 
\end{aligned}
\end{equation}
From Lemma \ref{lem:one}, the operator $KK^*$ has the same eigenvalues, but the
eigenfunctions are
\begin{equation}\label{eq:Kpsi}
\psi_n(x) = \sin (n-1/2) \pi x, \qquad n=1,2,\ldots.
\end{equation}
So, by Theorem \ref{thm:pinkus},
the $n$-width of $A^1_2$ is as given in equation \eqref{eq:dnAi}
and an optimal subspace is as given in \eqref{eigspace2}. The analogous results for $A^r_2$, $r>1$, follow from equation \eqref{eq:dnAr}.

\subsection{Proof of Theorem \ref{thm:A} for $A^r_2$}

We have already seen that the function class $A^r_2$ is the function class $A^r$ in \eqref{eq:Arseq} when $K$ has a kernel as given in \eqref{eq:lowestK}. Since it is well known that this choice of $K$ is NTP~\cite[p.~16]{Karlin:68}, we can apply Theorem~\ref{thm:new} to the spaces $X_n^0$ in \eqref{eq:Xn0} and $Y_n^0$ in \eqref{eq:Yn0}. All that remains to show is that the optimal subspaces $X_n^d$ generated as in equation~\eqref{eq:prop} are the spline spaces we claim.

The zeros $\xi_j$ of $\phi_{n+1}(x)$ in \eqref{eq:Kphi} are the knots in the even degree case for the knot vector $\bftau_2$ in equation \eqref{eq:alltau}, and the zeros $\eta_j$ of $\psi_{n+1}(x)$ in \eqref{eq:Kpsi} are the knots in the odd degree case.
Thus, $X_n^0$ in \eqref{eq:Xn0}, with the kernel of $K$ as in equation \eqref{eq:lowestK}, is equal to
\begin{align*}
 X_n^0 = [K(\cdot,\xi_1),\ldots, K(\cdot,\xi_n)] = S_{0,2},
\end{align*}
where $S_{0,2}$ is the piecewise constant spline space given in equation \eqref{eq:allS}. To find $X_n^1$ we perform a simple calculation to see that
\begin{align*}
 KK^*(x,y) = (K(x,\cdot), K(y,\cdot)) = \begin{cases}
                                               x, & x<y,\\
                                               y, & x>y,
                                              \end{cases}
\end{align*}
and so,
 $X_n^1 =K(Y_n^0)=[(KK^*)(\cdot,\eta_1),\ldots, (KK^*)(\cdot,\eta_n)] = S_{1,2},$
 the piecewise linear spline space given in equation \eqref{eq:allS}.
The remaining $X_n^d$, for $d\geq 2$, can be found by using the fact that $X_n^{d+2} = KK^*(X_n^d)$ and applying Lemma \ref{lem:one}, since the derivative of a spline is a spline on the same knot vector of one degree lower.

\emph{Remark:} We note that interchanging the roles of $K$ and $K^*$ shows that the subspaces $Y_n^d$ are optimal for the sets defined by interchanging the boundary conditions in $A^r_{2}$, i.e., odd derivatives set to zero at the left-hand side, and even derivatives set to zero at the right-hand side. One finds that the subspaces $Y_n^d$ are equal to their corresponding `dual' subspace $X_n^d$, just with interchanged boundary conditions and interchanged knots (i.e., replacing the even degree case for $\bftau_2$ in \eqref{eq:alltau} with the odd degree case, and vice versa).

\section{Symmetric boundary conditions}
In this section we study the $n$-width problems for the remaining function classes $A^r_0$ and $A^r_1$ in \eqref{eq:allA}.
Let $K_1$ be the operator given by
\begin{equation}\label{eq:K1}
 K_1 = (I-Q)K,
\end{equation}
where $Q$ is the orthogonal projection
onto the constant functions, $\Pi_{0}$, and $K$ is again the operator \eqref{eq:integralK}. 
From \cite{Melkman:78} we know that the set $A^1_1$ given in equation \eqref{eq:allA} can be written as the orthogonal sum
\begin{equation*}
 A^1_1= \Pi_0 \oplus K_1(B).
\end{equation*}
It follows from~\cite[Chap.~IV, Sec.~3.2]{Pinkus:85} that the set $A^1_0$ in equation \eqref{eq:allA} can be written as
 \begin{align*}
 A^1_0&=\{ u\in H^1 : \|u'\|\leq 1, \quad u(0)=u(1)=0 \},\\
 &= \{K^*f : \|f\|\leq 1,\quad f\perp 1\}
 =K^*(I-Q)(B)= K_1^*(B).
\end{align*}
The kernel of $K_1K_1^*$ is the Green's function to the boundary value problem
\begin{align}\label{eq:bvp3}
 -u''(x)=f(x),\quad u'(0)=u'(1)=0,\quad u,f\perp 1
\end{align}
(see e.g. \cite[Lemma 4]{Floater:17}). Using equation
\eqref{eq:bvp3} $i$ times and then adding back the constants we find that,
\begin{equation}\label{eq:Ar1}
A^{2i}_1 = \Pi_0\oplus (K_1K_1^*)^i(B),\qquad A^{2i+1}_1 = \Pi_0\oplus (K_1K_1^*)^iK_1(B),
\end{equation}
where $A^{2i}_1$ and $A^{2i+1}_1$ are as in equation \eqref{eq:allA}.
The kernel of $K_1^*K_1$ is the Green's function to the boundary value problem
\begin{align}\label{eq:bvp4}
 -u''(x)=f(x),\quad u(0)=u(1)=0
\end{align}
(see e.g. \cite[Lemma 3]{Floater:17}).
Then, using equation \eqref{eq:bvp4} $i$ times we find that,
\begin{equation*}
A^{2i}_0=(K_1^*K_1)^i(B), \qquad A^{2i+1}_0=(K_1^*K_1)^iK_1^*(B),
\end{equation*}
where $A^{2i}_0$ and $A^{2i+1}_0$ are as in equation \eqref{eq:allA}. Observe that the left-most operator for the function class $A^r_0$ is always $K_1^*$, and so $A^r_0$ is an instance of $A^r_*$ in \eqref{eq:Arseq}. The function class $A^r_1$, on the other hand, is not quite an instance of $A^r$ in \eqref{eq:Arseq}, but it is of the form $\Pi\oplus A^r$.

\subsection{Proof of Theorem \ref{thm:eig} for $A^r_0$ and $A^r_1$}
From equation \eqref{eq:bvp4} we see that 
the eigenvalues and eigenfunctions of $K_1^*K_1$ are
\begin{equation}\label{eq:Llambda}
\begin{aligned}
\lambda_n = \frac{1}{(n\pi)^2}, \qquad \phi_n(x) = \sin (n \pi x), \qquad n=1,2,\ldots.
\end{aligned}
\end{equation}
The operator $K_1K_1^*$ has the same eigenvalues, but the
eigenfunctions are
\begin{equation}\label{eq:Lpsi}
\psi_n(x) = \cos (n \pi x), \qquad n=1,2,\ldots.
\end{equation}
So, by Theorem \ref{thm:pinkus},
the $n$-widths of both $A^1_0=K_1^*(B)$ and $K_1(B)$ are equal to $d_n(A^1_0)$ in equation \eqref{eq:dnAi}.
An optimal $n$-dimensional subspace for $A^1_0$ is as given in \eqref{eigspace0},
and an optimal $n$-dimensional subspace for $K_1(B)$ is
$ [\cos (\pi x), \cos (2 \pi x), \ldots, \cos (n \pi x)].$
Since this subspace is orthogonal to $\Pi_0$ it follows that an optimal $(n+1)$-dimensional subspace for $A^1_1=\Pi_0\oplus K_1(B)$ is
$$ [1, \cos (\pi x), \cos (2 \pi x), \ldots, \cos (n \pi x)],$$
thus showing that \eqref{eigspace1} is an optimal $n$-dimensional space, and that the $n$-width of $A^1_1$ is as given in \eqref{eq:dnAi}. Pay special attention to this index-shift caused by $\Pi_0$: the $n$-width of $ K_1(B)$ is equal to $\lambda_{n+1}^{1/2}$, but the $n$-width of $A^1_1$ is equal to $\lambda_{n}^{1/2}$ in \eqref{eq:Llambda}.
As before, the analogous results for $A^r_0$ and $A^r_1$, $r>1$, follow from equation \eqref{eq:dnAr}.

\subsection{Proof of Theorem \ref{thm:A} for $A^r_0$ and $A^r_1$}
To prove Theorem \ref{thm:A} for $A^r_0$ and $A^r_1$ we will use Theorem \ref{thm:new} with $K_1$ playing the role of the generic operator $K$.
We must therefore identify the first optimal space $X_n^0$ for $K_1(B)$ and the first optimal space $Y_n^0$ for $A^1_0=K_1^*(B)$.
Unlike $K$ in equation \eqref{eq:integralK}, $K_1$ is not NTP (specifically, it is not totally positive) and this creates an extra challenge compared with subsection 7.2.
Fortunately, as shown in \cite{Melkman:78,Micchelli:77} the operator $K_1^*K_1$ is in fact NTP, and we can make use of this and other results in \cite[Section~5]{Melkman:78}.
Specifically, we have from \cite[Theorem 5.1]{Melkman:78} that
\begin{align*}
X_{n}^{0} =[K_1(\cdot,\xi_1),\ldots,K_1(\cdot,\xi_{n})]
\end{align*}
is an optimal subspace for the $n$-width of $K_1(B)$, where the $\xi_j$, for $j=1,2,\ldots,n$, are the $n$ zeros of $\phi_{n+1}(x)$ in \eqref{eq:Llambda}. Observe, that these $\xi_j$'s are the knots in the odd degree case for the knot vector $\bftau_0$ in \eqref{eq:alltau}.

Now, we consider $Y_n^0$. First, let $\eta_j$, for $j=1,2,\ldots,n+1$, be the $n+1$ zeros of $\psi_{n+1}(x)$ in \eqref{eq:Lpsi}, which are the knots in the even degree case of $\bftau_0$ in \eqref{eq:alltau}.
Additionally, let $J$ be the interpolation operator from
$C[0,1]$ to $\Pi_{0}$ determined by interpolating at $\eta_1$,
and define the operator $$\overline K_1 = (I-J)K,$$ where $K$ still is the operator \eqref{eq:integralK}.
If we let $Y_n^0$ be the $n$-dimensional space
 \begin{align}\label{eq:newYn1}
  Y_{n}^0 =
     [\overline K_1^*(\cdot,\eta_{2}),\ldots,\overline K_1^*(\cdot,\eta_{n+1}) ],
\end{align}
 then the proof of \cite[Theorem 5.1]{Melkman:78} (or \cite[Theorem 5.11 p.~121]{Pinkus:85}) contains the following important result.
\begin{lemma}\label{lem:Melkman}
If $P_n$ is the orthogonal projection onto the space $Y_n^0$, and $\lambda_{n+1}$ is as in \eqref{eq:Llambda}, then
  \begin{equation}\label{ineq:Pinkus}
  \sup_{\|f\|\leq 1}\|\overline K_1(I-P_n)f\|\leq \lambda_{n+1}^{1/2}.
\end{equation}
\end{lemma}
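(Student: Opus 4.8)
The plan is to reduce \eqref{ineq:Pinkus} to a collection of one-dimensional Poincar\'e (Wirtinger) inequalities, one on each subinterval cut out by the knots $\eta_j$, and then to exploit the fact that these knots are the zeros of $\psi_{n+1}(x)=\cos((n+1)\pi x)$.

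First I would rewrite the supremum over $f$ as a supremum over the error $g=(I-P_n)f$. Since $P_n$ is the orthogonal projection onto $Y_n^0$, the set $\{(I-P_n)f:\|f\|\le 1\}$ is exactly the unit ball of the orthogonal complement of $Y_n^0$, so by homogeneity it suffices to show $\|\overline K_1 g\|\le \lambda_{n+1}^{1/2}\|g\|$ for every $g\perp Y_n^0$. Put $v=\overline K_1 g$. From $\overline K_1=(I-J)K$ and \eqref{eq:Kh} one gets $v'=g$ and $v(x)=\int_{\eta_1}^x g$, so $v(\eta_1)=0$ automatically. Orthogonality of $g$ to the generators $\overline K_1^*(\cdot,\eta_j)$ of $Y_n^0$ in \eqref{eq:newYn1} translates, via $(g,\overline K_1^*(\cdot,\eta_j))=(\overline K_1 g)(\eta_j)$, into $v(\eta_j)=0$ for $j=2,\ldots,n+1$. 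Hence $v$ vanishes at all $n+1$ knots $\eta_1<\cdots<\eta_{n+1}$, and the whole statement reduces to proving $\|v\|\le \lambda_{n+1}^{1/2}\|v'\|$ on $[0,1]$ for functions $v$ with $v(\eta_j)=0$.

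Next I would split $[0,1]$ at the knots into the $n$ interior intervals $[\eta_j,\eta_{j+1}]$ together with the two end intervals $[0,\eta_1]$ and $[\eta_{n+1},1]$, and bound $\int_I v^2$ against $\int_I (v')^2$ on each piece $I$. On an interior interval $v$ vanishes at both endpoints, so the sharp constant is the first Dirichlet eigenvalue of $-w''=\mu w$; on an end interval $v$ vanishes at only one endpoint and is unconstrained at $0$ (resp. $1$), so the sharp constant is the first eigenvalue of the mixed Dirichlet/Neumann problem, whose minimizer satisfies the natural condition $v'(0)=0$ (resp. $v'(1)=0$). Summing these local inequalities over all subintervals yields $\|v\|^2\le C\|v'\|^2$ with $C$ the largest of the local constants.

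The crux is that all of these local constants equal $\lambda_{n+1}=1/((n+1)\pi)^2$, and this is forced by the placement of the knots as the zeros of $\psi_{n+1}(x)=\cos((n+1)\pi x)$ in \eqref{eq:Lpsi}. The interior intervals have length $1/(n+1)$, a half period, giving first Dirichlet eigenvalue $(n+1)^2\pi^2$; the two end intervals have length $1/(2(n+1))$, a quarter period, so the mixed problem there again has first eigenvalue $(n+1)^2\pi^2$. Thus every local constant equals $\lambda_{n+1}$ and $\|v\|^2\le \lambda_{n+1}\|v'\|^2$, which is \eqref{ineq:Pinkus}; equality is attained at $v=\psi_{n+1}$, confirming sharpness. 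The main obstacle is precisely this eigenvalue bookkeeping at the two ends: one must verify that the natural boundary condition at $0$ and $1$ produces, on a quarter period, the same first eigenvalue as the Dirichlet problem does on a half period, which is exactly what the choice $\eta_1=\tfrac{1/2}{n+1}$ and $\eta_{n+1}=\tfrac{n+1/2}{n+1}$ guarantees.
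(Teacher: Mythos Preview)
Your argument is correct and self-contained. The reduction to $v(\eta_j)=0$ via $(g,\overline K_1^*(\cdot,\eta_j))=(\overline K_1 g)(\eta_j)$ is right, and the piecewise Poincar\'e estimate with the Dirichlet constant on the interior intervals and the Dirichlet--Neumann constant on the two half-length end intervals indeed gives exactly $\lambda_{n+1}$ on every piece.

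This is, however, a genuinely different route from the paper. The paper does not prove the inequality directly; it simply observes that the present $K_1$ is a special case of the operator treated in \cite[Theorem~5.11]{Pinkus:85} (equivalently \cite[Theorem~5.1]{Melkman:78}) and quotes the bound from there. That cited argument is operator-theoretic and relies on the total positivity of $K_1^*K_1$ together with the zero structure of the eigenfunctions; it applies to a whole family of kernels, not just to integration. Your proof, by contrast, exploits the explicit situation at hand: $\overline K_1$ is antidifferentiation based at $\eta_1$, the $\eta_j$ are the equispaced zeros of $\cos((n+1)\pi x)$, and the half-length end intervals are exactly quarter periods. What you gain is an elementary, fully explicit proof that also makes the sharpness (equality at $v=\psi_{n+1}$) transparent; what the paper's route gains is generality, since the Pinkus/Melkman--Micchelli machinery would still apply if $K$ were replaced by another NTP kernel where no closed-form eigenfunctions or Wirtinger constants are available.
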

\begin{proof}
The operator $K_1$ is a special case of the operator $K_1$ in \cite{Pinkus:85} (as explained on page 124). It therefore satisfies the assumptions of \cite[Theorem 5.11 p.~121]{Pinkus:85}, and inequality \eqref{ineq:Pinkus} is then proved on page 122. Note the shift in index, the above $n+1$ corresponds to $n$ in \cite[Theorem 5.11]{Pinkus:85}.
\end{proof}
Using this inequality we can show the following.
\begin{theorem}\label{thm:Y}
The space $Y_n^0$ is optimal for $A_0^1=K_1^*(B)$.
\end{theorem}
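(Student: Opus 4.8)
The plan is to establish the matching upper bound $E(A_0^1,Y_n^0)\le\lambda_{n+1}^{1/2}$ and combine it with the automatic lower bound. Since $A_0^1=K_1^*(B)$ is of the form $\tilde K(B)$ with $\tilde K=K_1^*$, and $\tilde K^*\tilde K=K_1K_1^*$ has the eigenvalues $\lambda_n$ of \eqref{eq:Llambda}, Theorem \ref{thm:pinkus} gives $d_n(A_0^1)=\lambda_{n+1}^{1/2}$. As $Y_n^0$ is an $n$-dimensional subspace, the definition of the $n$-width forces $E(A_0^1,Y_n^0)\ge d_n(A_0^1)=\lambda_{n+1}^{1/2}$, so it suffices to prove the reverse inequality.

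First I would pass to the dual formulation. Writing each element of $A_0^1=K_1^*(B)$ as $K_1^*f$ with $\|f\|\le 1$ and using \eqref{eq:E1},
$$E(A_0^1,Y_n^0)=\|(I-P_n)K_1^*\|_2=\|K_1(I-P_n)\|_2,$$
where the second equality uses that the operator norm is invariant under taking adjoints together with the self-adjointness of the orthogonal projection $P_n$, so that $\bigl((I-P_n)K_1^*\bigr)^*=K_1(I-P_n)$. It therefore remains to show $\|K_1(I-P_n)f\|\le\lambda_{n+1}^{1/2}\|f\|$ for every $f\in L^2$.

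The key observation is that $K_1=(I-Q)K$ and $\overline K_1=(I-J)K$ differ only in which constant they subtract from $Kf$: since $Q$ is the orthogonal projection onto $\Pi_0$, the function $(I-Q)g$ is the mean-zero part of $g$, whereas $J$ subtracts the constant equal to the value of $g$ at $\eta_1$. Applying this with $g=K(I-P_n)f$ gives
$$K_1(I-P_n)f=g-Qg,\qquad \overline K_1(I-P_n)f=g-Jg,$$
with $Jg\in\Pi_0$. Because $Qg$ is the $L^2$-best constant approximation to $g$, we have $\|g-Qg\|\le\|g-c\|$ for every constant $c$, and in particular
$$\|K_1(I-P_n)f\|=\|g-Qg\|\le\|g-Jg\|=\|\overline K_1(I-P_n)f\|.$$
Combining this with the estimate $\|\overline K_1(I-P_n)f\|\le\lambda_{n+1}^{1/2}\|f\|$ from Lemma \ref{lem:Melkman} yields $\|K_1(I-P_n)\|_2\le\lambda_{n+1}^{1/2}$, hence $E(A_0^1,Y_n^0)\le\lambda_{n+1}^{1/2}=d_n(A_0^1)$, and the optimality follows.

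The crux of the argument is the transfer from $\overline K_1$, for which Melkman's estimate in Lemma \ref{lem:Melkman} is available, to $K_1$, which actually defines $A_0^1$. The dual reformulation in the second step is precisely what makes both operators act on the same argument $(I-P_n)f$, and the elementary extremal property of the mean as the best constant approximation is exactly what controls the $K_1$ error by the $\overline K_1$ error; I expect the bookkeeping around the index shift in $Y_n^0$ (the basis functions being indexed by $\eta_2,\ldots,\eta_{n+1}$ while the interpolation node is $\eta_1$) to be the only delicate point to state carefully.
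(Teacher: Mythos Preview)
Your proof is correct and follows essentially the same route as the paper's own argument: pass to the adjoint to rewrite $E(A_0^1,Y_n^0)=\|K_1(I-P_n)\|_2$, then use that $Q$ is the \emph{orthogonal} projection onto $\Pi_0$ (while $J$ is not) to bound $\|(I-Q)K(I-P_n)\|_2\le\|(I-J)K(I-P_n)\|_2$, and conclude via Lemma~\ref{lem:Melkman}. Your explicit justification of the inequality via the best-constant property of the mean is exactly the content of the paper's one-line remark that ``only $Q$ is the orthogonal projection.''
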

\begin{proof}
Let $P_n$ be the orthogonal projection onto $Y_n^{0}$ in \eqref{eq:newYn1}. To prove that $Y_n^{0}$ is an optimal subspace for $A_0^1$, we need to show that
\begin{equation*}
  E(A_0^1, Y_n^{0})\leq d_n(A_0^1),
\end{equation*}
or equivalently,
\begin{equation*}
  \|(I-P_n)K_1^*\|_2\leq \lambda_{n+1}^{1/2},
\end{equation*}
with $\lambda_{n+1}$ as given in \eqref{eq:Llambda}. 
First observe that 
\begin{equation*}
 \|(I-P_n)K_1^*\|_2= \|K_1(I-P_n)\|_2= \|(I-Q)K(I-P_n)\|_2.
\end{equation*}
Next, since both $J$ and $Q$ are projections onto the constants, $\Pi_0$, but only $Q$ is the orthogonal projection, we must have
\begin{equation*}
  \|(I-Q)K(I-P_n)\|_2\leq \|(I-J)K(I-P_n)\|_2=\|\overline K_1(I-P_n)\|_2.
\end{equation*}
Hence, the result follows from Lemma \ref{lem:Melkman}.
\end{proof}

\emph{Remark:} Melkman and Micchelli \cite[Theorem 5.1]{Melkman:78} used the inequality in Lemma~\ref{lem:Melkman} to directly conclude that the $(n+1)$-dimensional space
\begin{equation*}
 \Pi_{0} + [(\overline K_1\, \overline K_1^*)(\cdot,\eta_{2}),\ldots,(\overline K_1\, \overline K_1^*)(\cdot,\eta_{n+1}) ],
\end{equation*}
is optimal for the set $A^1_1=\Pi_{0}\oplus K_1(B)$. On the other hand, from \cite[Lemma 1]{Floater:17} and the above Theorem \ref{thm:Y}, it follows that $K_1(Y_{n}^{0})$ is an optimal space for $K_1(B)$, and so
\begin{equation*}
   \Pi_{0}\oplus K_1(Y_{n}^{0})=\Pi_{0}\oplus[(K_1\overline K_1^*)(\cdot,\eta_{2}),\ldots,(K_1\overline K_1^*)(\cdot,\eta_{n+1})],
\end{equation*}
is optimal for $A^1_1$. This is consistent with their result, since the difference 
$$(K_1\overline K_1^*)(\cdot,\eta_{j})-(\overline K_1\,\overline K_1^*)(\cdot,\eta_{j}),$$
is a constant for any $j=2,\ldots,n+1$.

We now have the first optimal space $X_n^0$ for $K_1(B)$ and the first optimal space $Y_n^0$ for $A^1_0=K_1^*(B)$, and so we can apply Theorem \ref{thm:new}.
To do this let us express $A^r_1$ in equation \eqref{eq:Ar1} as
$$A^r_1=\Pi_0\oplus \tilde{A^r_1}.$$
Now, if $X_n^d$ and $Y_n^d$ are generated as in \eqref{eq:prop} with $K_1$ playing the role of the generic $K$, then it follows from Theorem \ref{thm:new} that, for all $r\geq 1$,
\begin{itemize}
 \item the $n$-dimensional spaces $X_n^d$ are optimal for the $n$-width of $\tilde{A^r_1}$, and
 \item the $n$-dimensional spaces $Y_n^d$ are optimal for the $n$-width of $A^r_0$,
\end{itemize}
for all $d\geq r-1$. Note further that these spaces are all $n$-dimensional since the $n$-widths in \eqref{eq:dnAi} are strictly decreasing.
Moreover, since both $X_n^d\perp\Pi_{0}$ and $\tilde{A^r_1}\perp\Pi_{0}$, we find that the $(n+1)$-dimensional spaces $\Pi_{0}\oplus X_n^d$ are optimal for $A^r_1=\Pi_{0}\oplus \tilde{A^r_1}$ for $d\geq r-1$.

The remaining task is to recognize the spaces $\Pi_0\oplus X_n^d$ and $Y_n^d$ as spline spaces.
As already stated, the optimal spaces $\Pi_0\oplus X_n^d$ were identified in \cite{Floater:17} and we have the equality
\begin{equation*}
 S_{d,1} = \Pi_0\oplus X_{n-1}^{d},
\end{equation*}
where $S_{d,1}$ is the $n$-dimensional space defined in \eqref{eq:allS}.
 However, only the spline spaces $Y_n^d$ when $d$ is odd were found in \cite{Floater:17}. In that case we have
\begin{equation}\label{eq:Ys}
 S_{d,0}=Y_n^{d},
\end{equation}
with $S_{d,0}$ also as in \eqref{eq:allS}. Now, using the definition of $\overline K_1$ we find that the kernel $\overline K_1^*(x,y) = \overline K_1(y,x)$ is equal to
\begin{equation*}
 \overline K_1^*(x,y) = \begin{cases}
                     0, \quad x<\eta_1,
                     \\
                     1, \quad \eta_1<x<y,
                     \\
                     0, \quad x>y,
                    \end{cases}
\end{equation*}
for $y>\eta_1$. The space $Y_n^0$ in equation \eqref{eq:newYn1} is then the space of piecewise constant splines with knots $\eta_j$, $j=1,\ldots,n+1$, that vanish on the intervals $[0,\eta_1)$ and $(\eta_{n+1},1]$. Since $Y_n^2=K_1^*K_1(Y_n^0)$, and so on, we know from \eqref{eq:bvp4} that equation \eqref{eq:Ys} also holds in the case of $d$ even.
This proves Theorem \ref{thm:A} for $A^r_0$ and $A^r_1$.

\section{Basis functions}
In this section we describe how to create a local basis for the spline spaces $S_{d,i},$ $i=0,1,2$. First consider $i=1$. An explanation of how to construct a local basis for $S_{d,1}$ (with~$d$ even) is presented in \cite{Takacs:2016}.
The basic idea consists of three parts. Start with our uniform knot vector $\bftau_1$ in \eqref{eq:alltau} and extend it to a uniform knot vector on the whole real line. Second, construct all the $B$-splines on this infinite knot vector that have non-zero support on $(0,1)$. Third, identify the $B$-splines that cross the boundary and add them together in pairs, chosen in such a manner that the symmetry of uniform $B$-splines ensures the boundary conditions (all odd derivatives set to zero) are satisfied. Figure \ref{fig:Sd1} shows the basis functions for $S_{d,1}$ of degree $0$ to $3$ with knot-distance $0.2$ ($n=5$). 

Next, we consider $i=0$. Constructing a basis for $S_{d,0}$ can be done by essentially the same procedure as for $S_{d,1}$. Instead of adding pairs of $B$-splines together we take differences. The symmetry of uniform $B$-splines will again ensure that the boundary conditions (all even derivatives set to zero) are satisfied. Figure \ref{fig:Sd0} shows the basis functions for $S_{d,0}$ of degree $0$ to $3$ with knot-distance $0.2$ ($n=4$).

\begin{figure}
  \centering\captionsetup{width=.5\linewidth}
      \includegraphics[width=.4\linewidth]{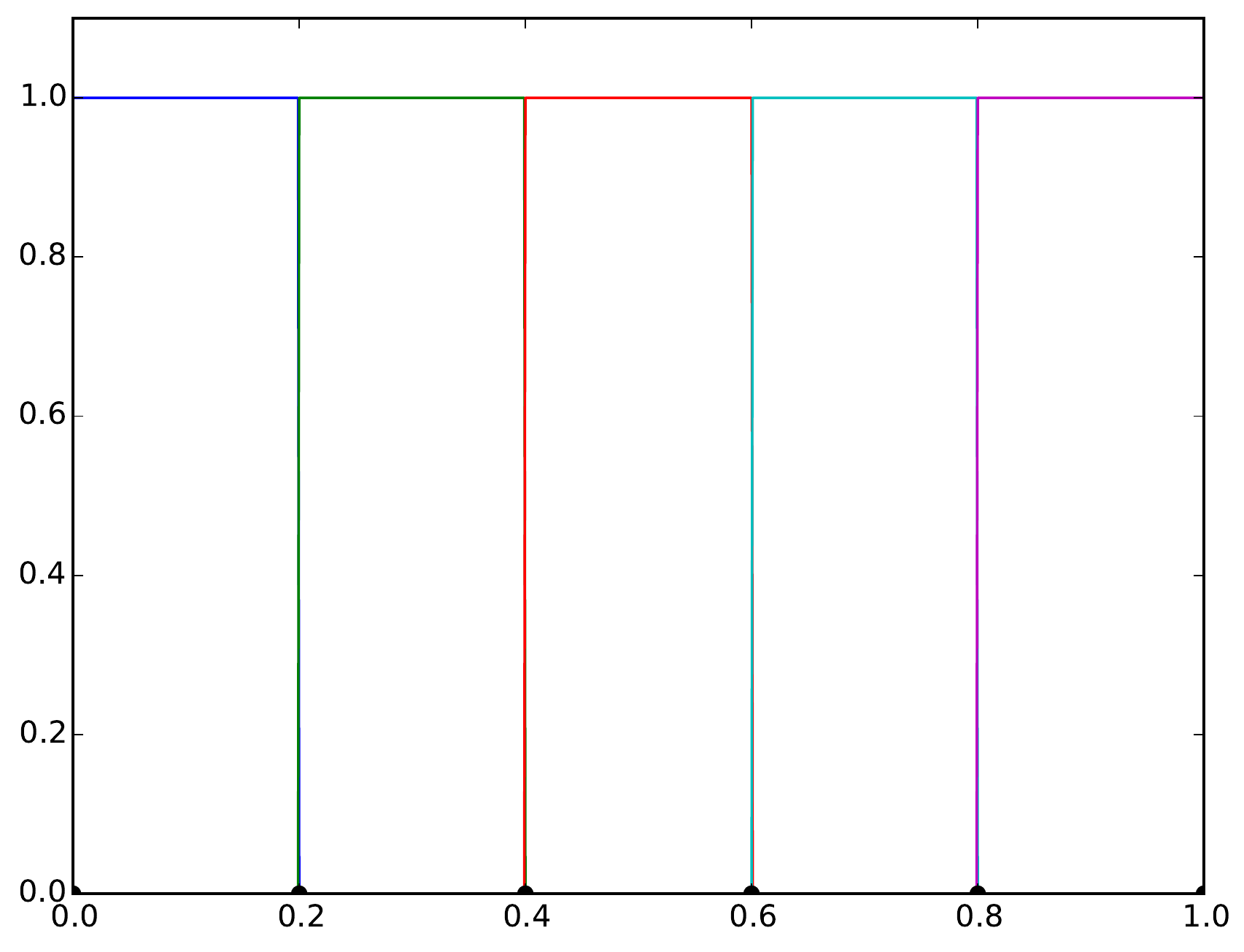}
      \includegraphics[width=.4\linewidth]{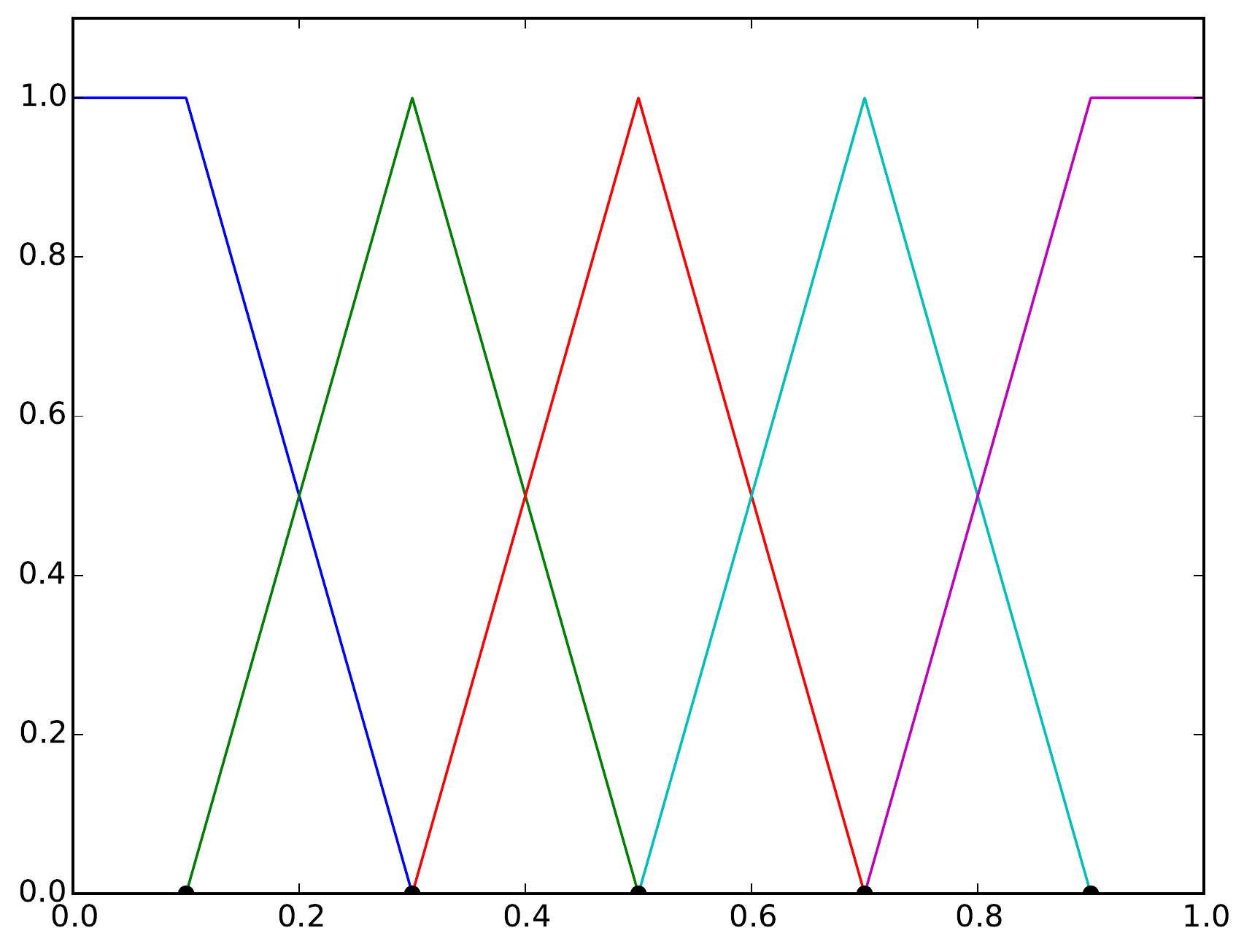}
      
      \includegraphics[width=.4\linewidth]{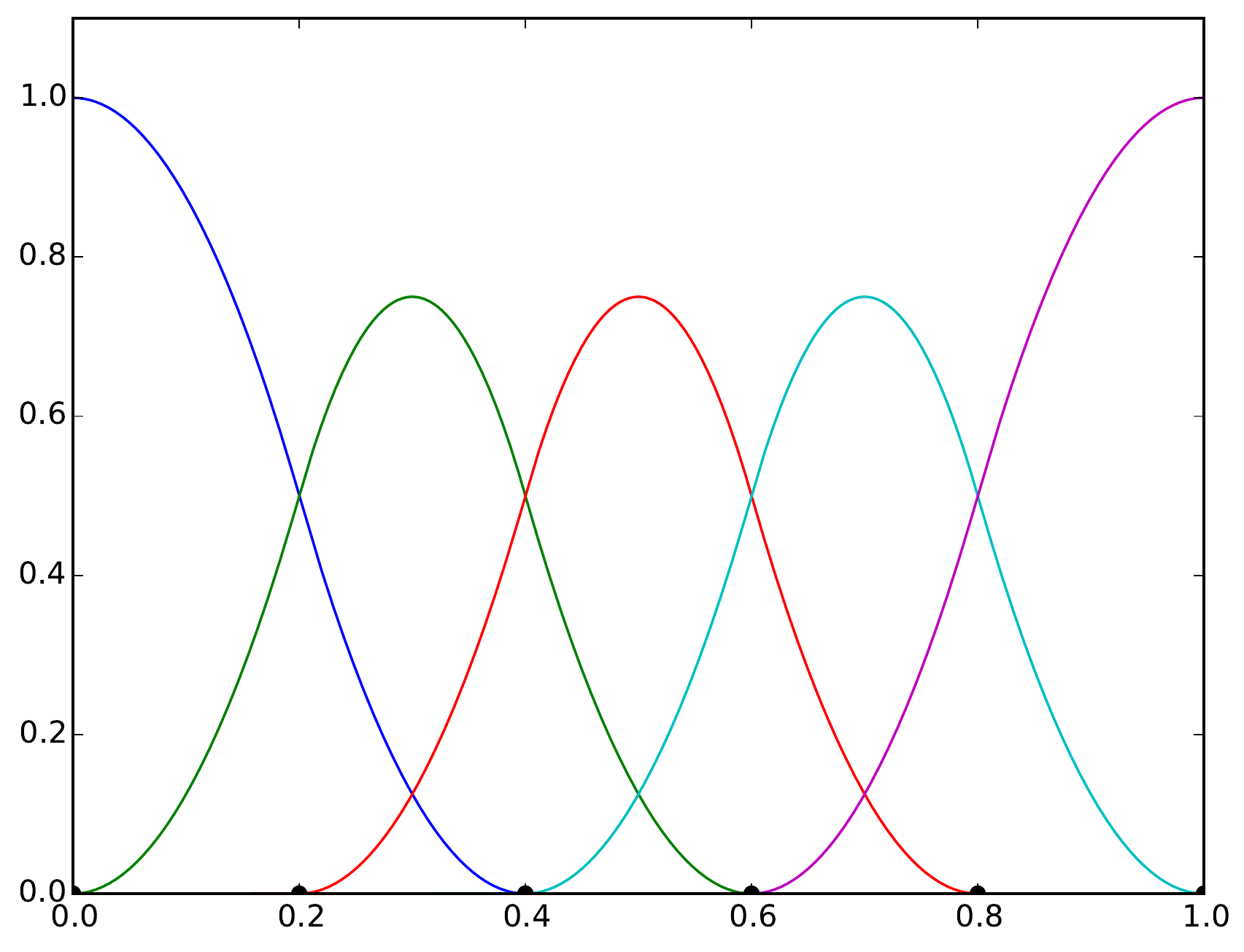}
      \includegraphics[width=.4\linewidth]{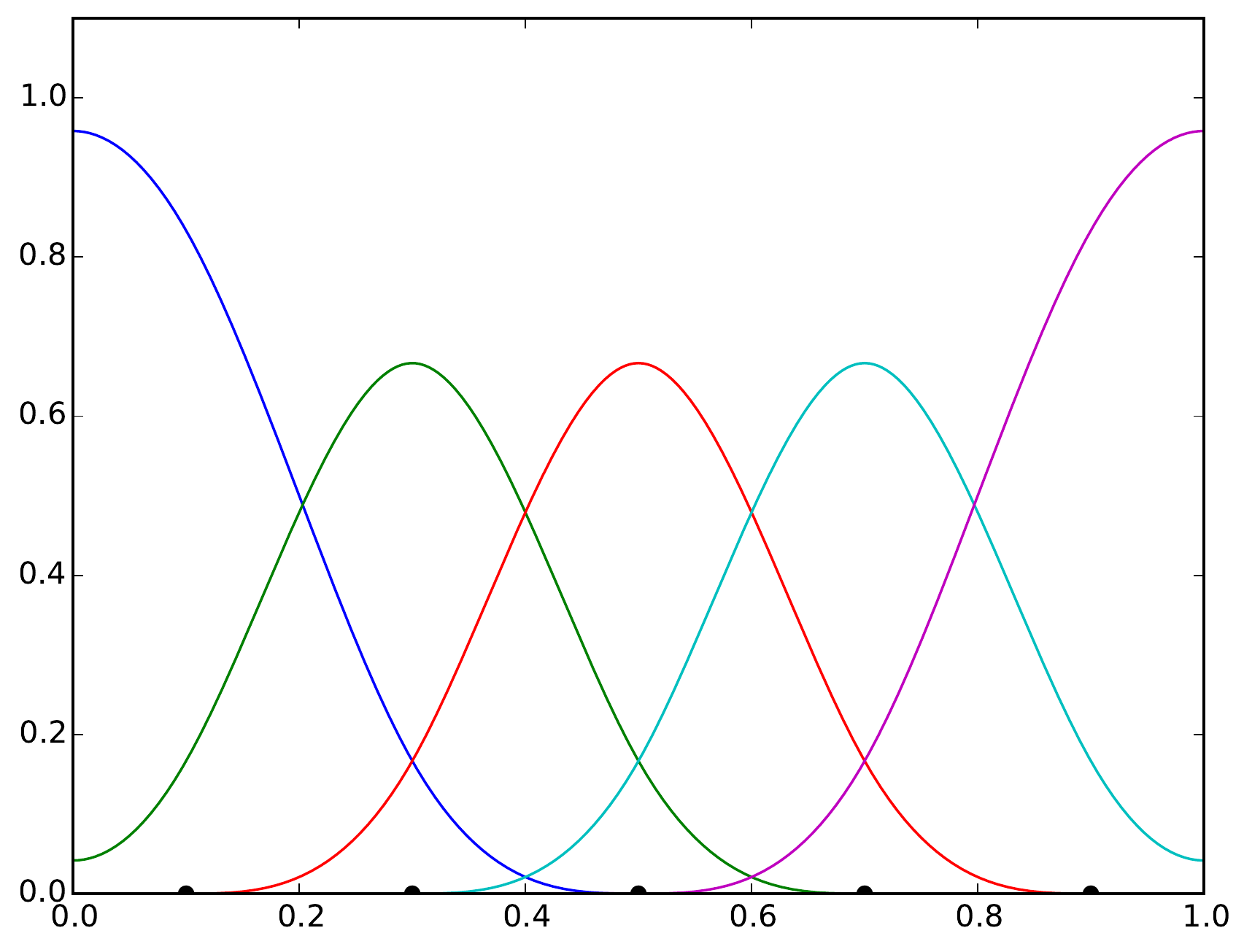}
  \caption{Basis functions for $S_{d,1}$.}
  \label{fig:Sd1}

      \includegraphics[width=.4\linewidth]{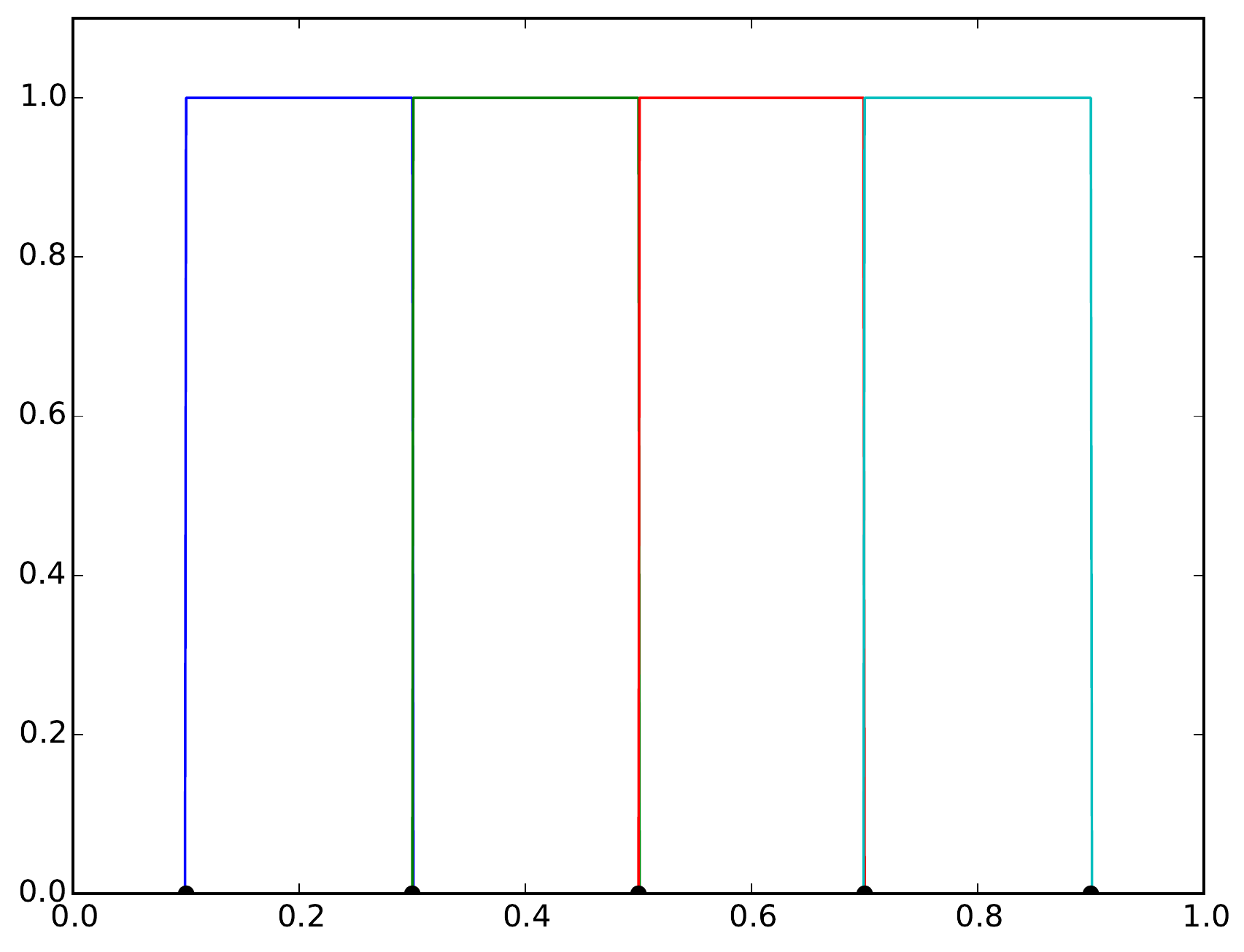}
      \includegraphics[width=.4\linewidth]{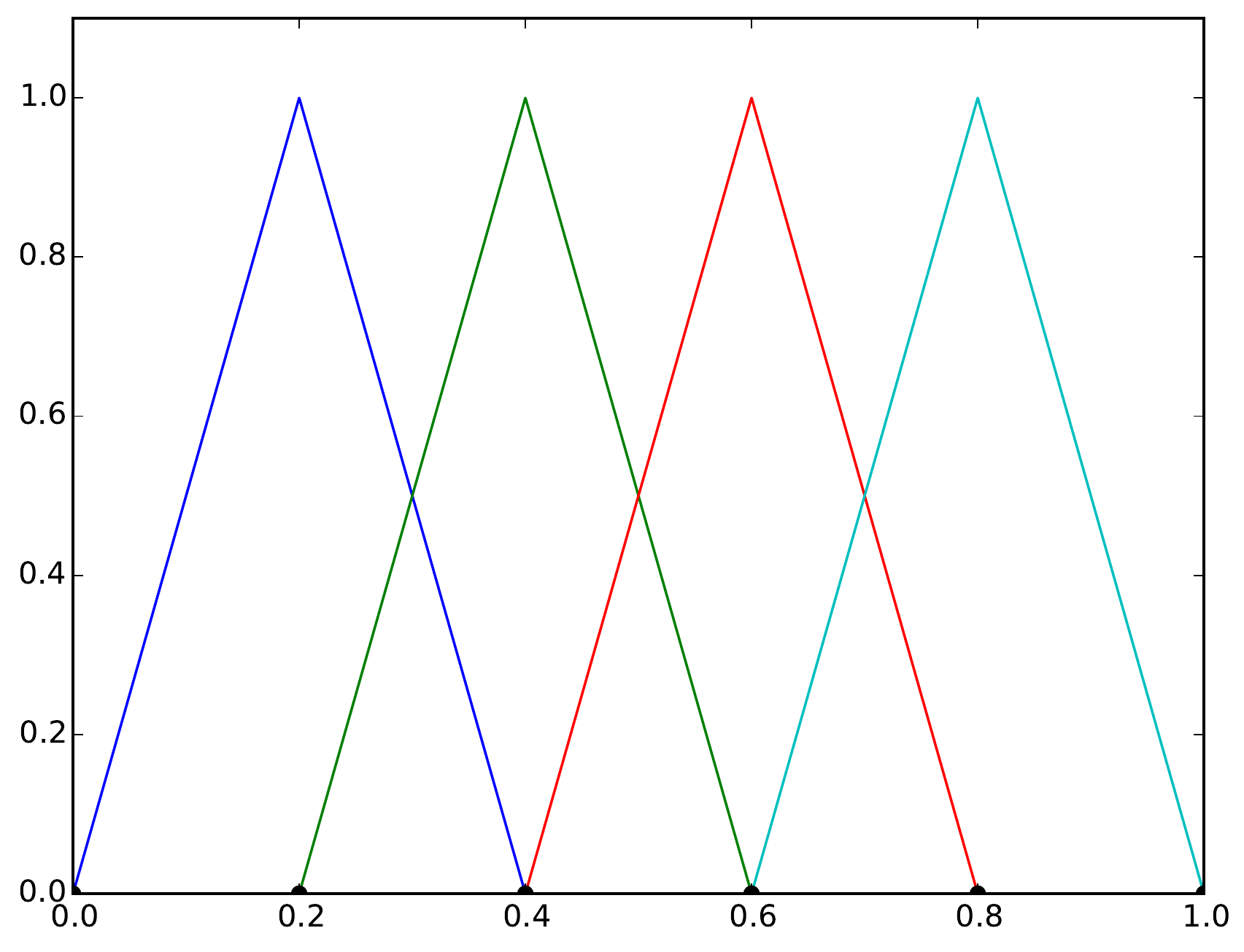}
      
      \includegraphics[width=.4\linewidth]{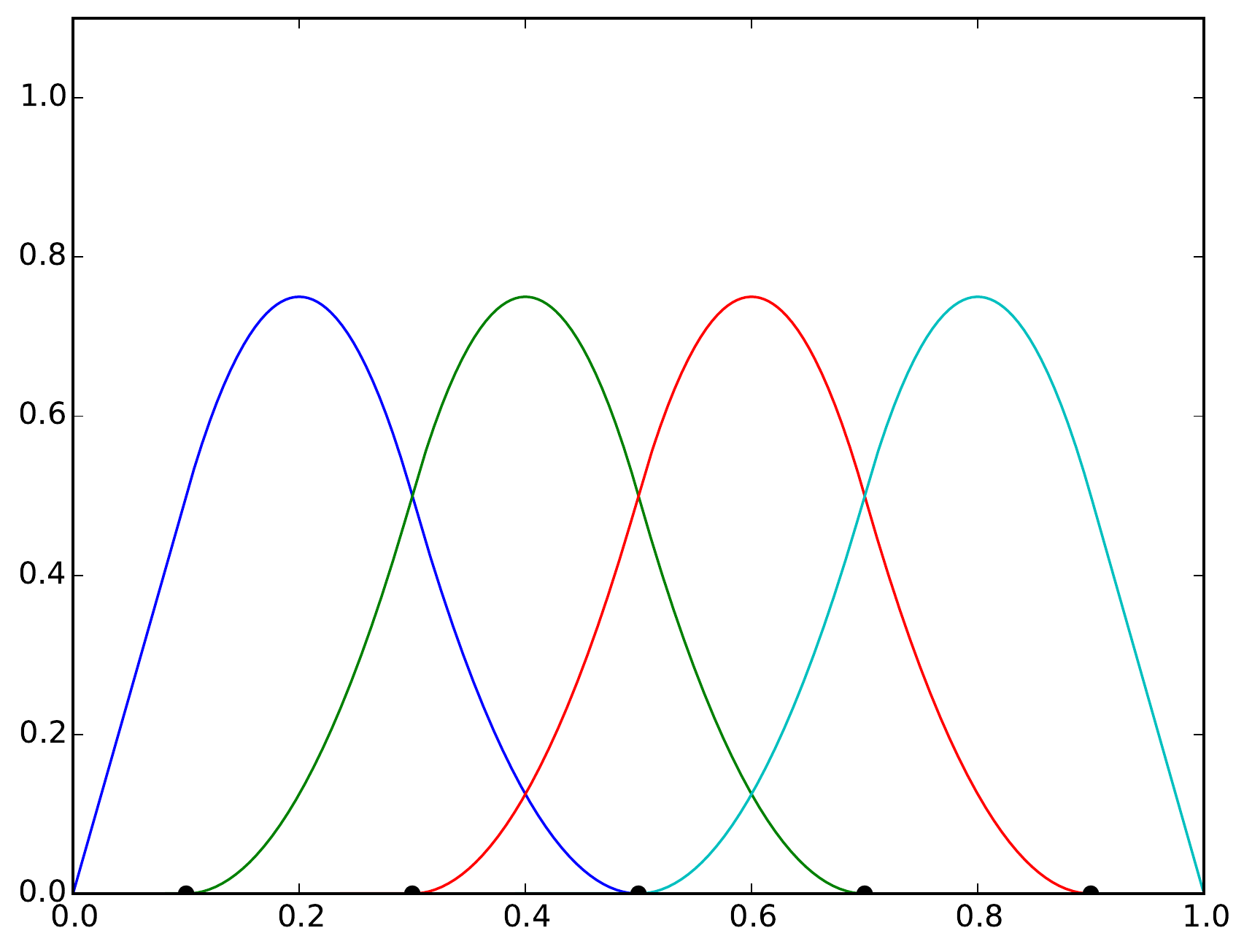}
      \includegraphics[width=.4\linewidth]{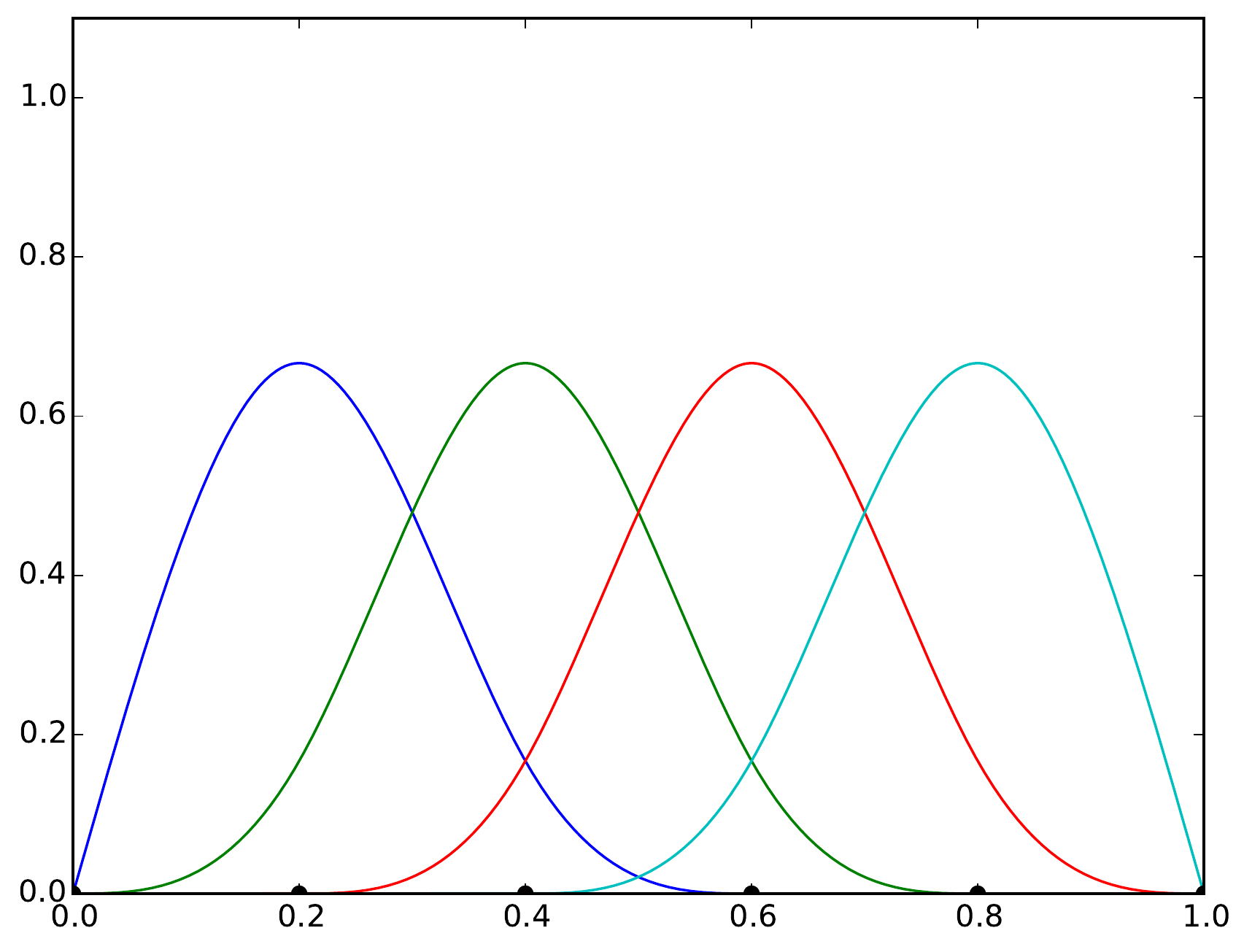}
  \caption{Basis functions for $S_{d,0}$.}
  \label{fig:Sd0}
\end{figure}

Regarding $i=2$, adding pairs of $B$-splines together on the right-hand side and subtracting on the left-hand side will give a basis for $S_{d,2}$. Figure \ref{fig:Sd2} shows the basis functions for $S_{d,2}$ of degree $0$ to $3$ with knot-distance $2/9$ ($n=4$).

\begin{figure}
  \centering\captionsetup{width=.5\linewidth}
  	  \includegraphics[width=.4\linewidth]{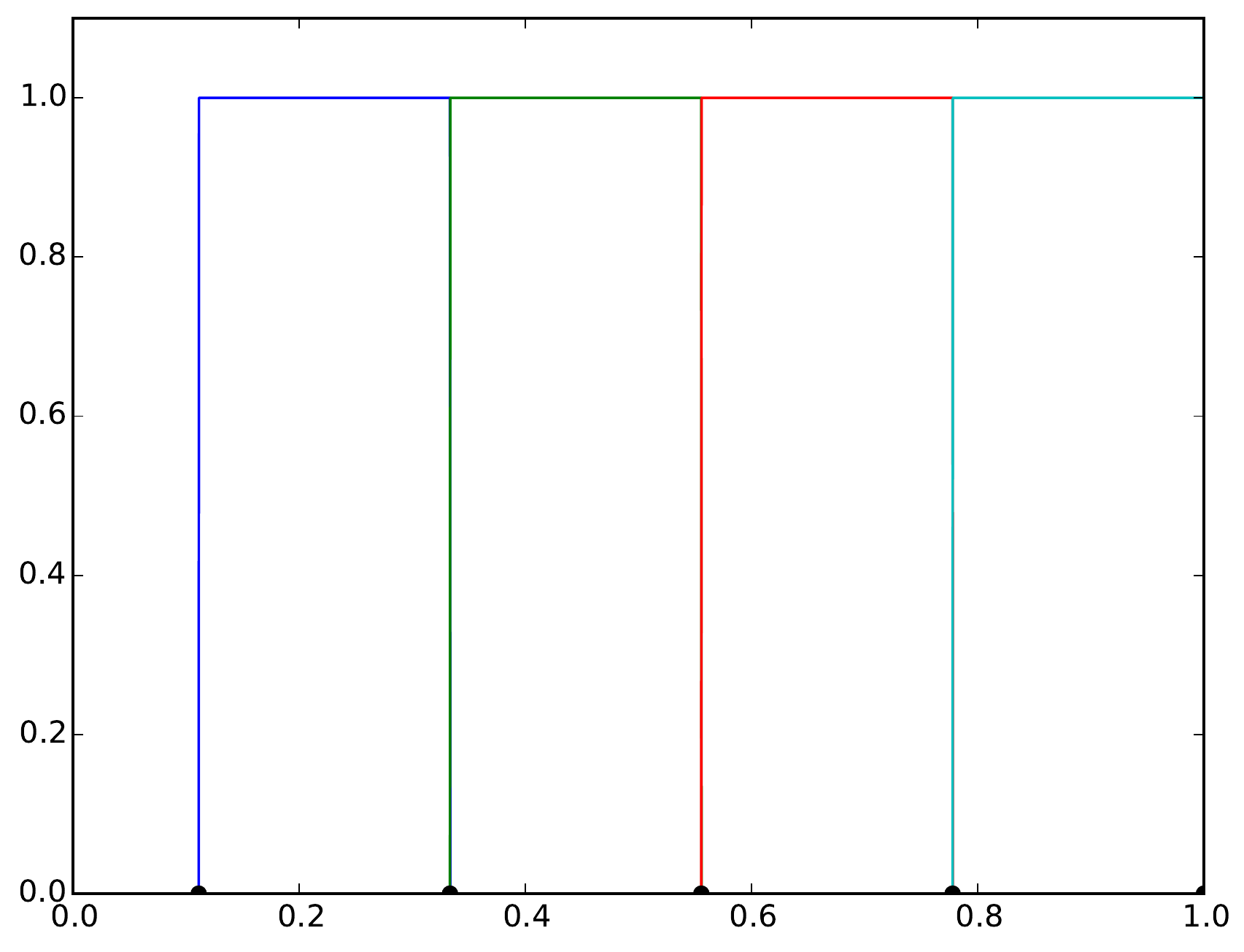}
      \includegraphics[width=.4\linewidth]{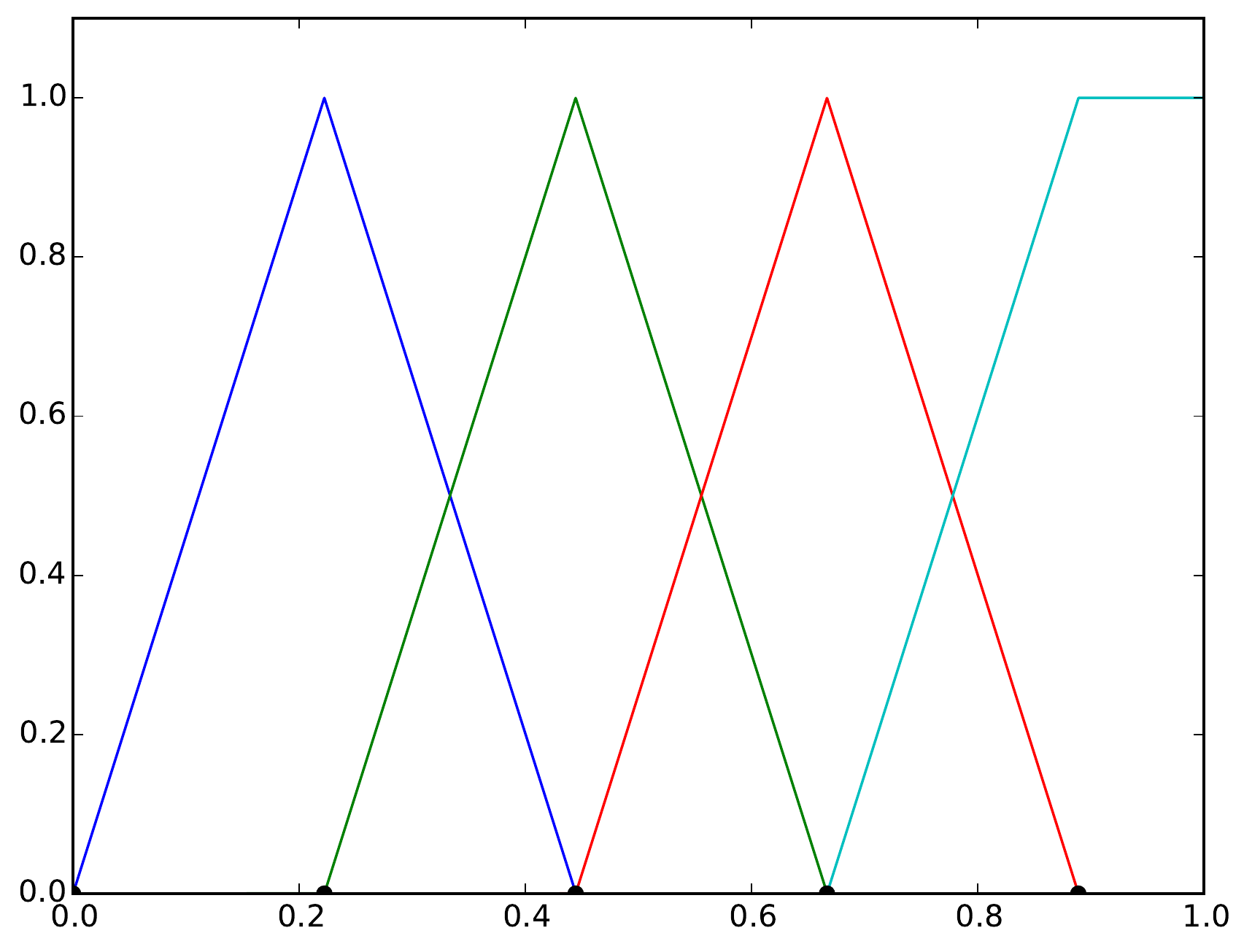}
      
      \includegraphics[width=.4\linewidth]{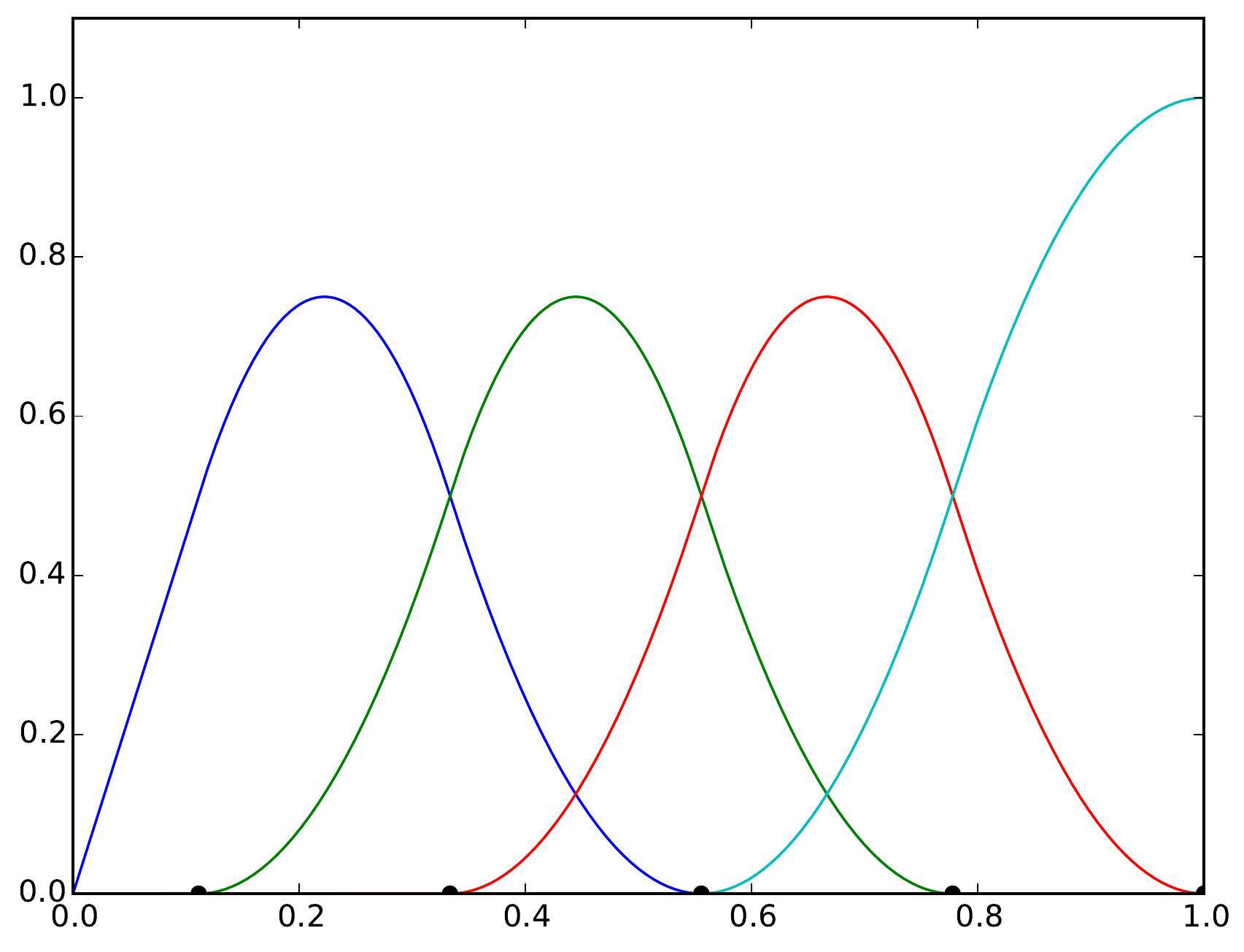}
      \includegraphics[width=.4\linewidth]{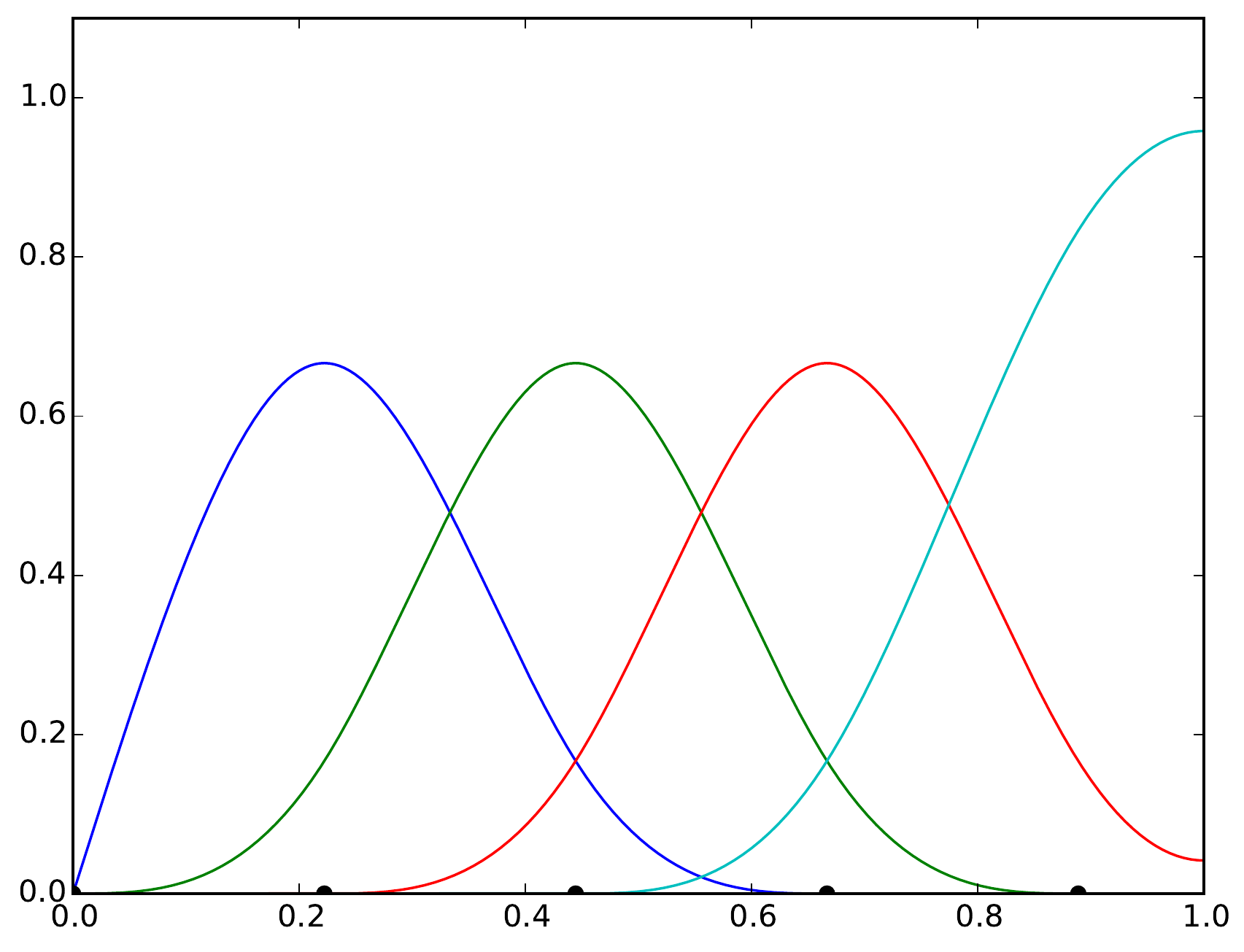}
  \caption{Basis functions for $S_{d,2}$.}
  \label{fig:Sd2}
\end{figure}

\section*{Acknowledgements}
We wish to thank the two referees for their careful reading of the
manuscript and their valuable comments which helped improve the paper. 
Espen Sande was supported by the
European Research Council under the European Union's Seventh Framework
Programme (FP7/2007-2013) / ERC grant agreement 339643.

\bibliography{nwidths}

\end{document}